\documentclass{amsart}

\usepackage{amssymb,url,amsthm}
\usepackage{mathtools}
\usepackage{hyperref}
\usepackage{color,graphicx}
\usepackage{subcaption}

\newtheorem{theorem}{Theorem}[section]
\newtheorem{lemma}[theorem]{Lemma}

\newtheorem{algorithm}[theorem]{Algorithm}
\newtheorem{remark}[theorem]{Remark}
\newtheorem{proposition}[theorem]{Proposition}
\newtheorem{definition}[theorem]{Definition}
\newtheorem{corollary}[theorem]{Corollary}

\newcommand{\rd}{\, \mathrm{d}}
\newcommand{\re}{\mathrm{e}}
\newcommand{\ri}{\mathrm{i}}
\newcommand{\bsgamma}{\boldsymbol{\gamma}}
\newcommand{\bsdelta}{\boldsymbol{\Delta}}
\newcommand{\bszero}{\boldsymbol{0}}

\newcommand{\bsk}{\boldsymbol{k}}
\newcommand{\bsell}{\boldsymbol{\ell}}
\newcommand{\bsx}{\boldsymbol{x}}
\newcommand{\bsy}{\boldsymbol{y}}
\newcommand{\bsz}{\boldsymbol{z}}
\newcommand{\ran}{\mathrm{ran}}
\newcommand{\rms}{\mathrm{rms}}
\newcommand{\wor}{\mathrm{wor}}

\newcommand{\EE}{\mathbb{E}}
\newcommand{\NN}{\mathbb{N}}
\newcommand{\PP}{\mathbb{P}}
\newcommand{\RR}{\mathbb{R}}
\newcommand{\ZZ}{\mathbb{Z}}

\newcommand{\Pcal}{\mathcal{P}}
\newcommand{\Zcal}{\mathcal{Z}}
 
\newcommand{\Mod}[1]{\ (\mathrm{mod}\ #1)}

\allowdisplaybreaks

\title[A randomized lattice rule without CBC construction]{A randomized lattice rule without component-by-component construction}
\author{Takashi Goda}
\address{School of Engineering, The University of Tokyo, 7-3-1 Hongo, Bunkyo-ku, Tokyo 113-8656, Japan.}
\email{goda@frcer.t.u-tokyo.ac.jp}
\thanks{This work is supported by JSPS KAKENHI Grant Number 23K03210.}
\subjclass[2010]{Primary 11K45, 65C05, 65D30, 65D32}
\keywords{Quasi-Monte Carlo, rank-1 lattice rules, randomized algorithms, component-by-component construction, weighted Korobov space}

\date{\today}

\begin{document}

%------------------------------------------
%------------------------------------------
\begin{abstract}
    We study the multivariate integration problem for periodic functions from the weighted Korobov space in the randomized setting. We introduce a new randomized rank-1 lattice rule with a randomly chosen number of points, which avoids the need for component-by-component construction in the search for good generating vectors while still achieving nearly the optimal rate of the randomized error. Our idea is to exploit the fact that at least half of the possible generating vectors yield nearly the optimal rate of the worst-case error in the deterministic setting. By randomly choosing generating vectors $r$ times and comparing their corresponding worst-case errors, one can find one generating vector with a desired worst-case error bound with a very high probability, and the (small) failure probability can be controlled by increasing $r$ logarithmically as a function of the number of points. Numerical experiments are conducted to support our theoretical findings.
\end{abstract}
%------------------------------------------
%------------------------------------------

\maketitle

\section{Introduction}
We study the problem of numerical integration for functions defined over the multi-dimensional unit cube. For a dimension $d$ and a function $f: [0,1]^d\to \RR$, we denote the integral of $f$ by
\[ I(f):=\int_{[0,1]^d}f(\bsx)\rd \bsx. \]
Throughout this paper, we assume that $f$ is one-periodic in each variable and belongs to the \emph{weighted Korobov space}, denoted as $H_{\alpha,\bsgamma}$, whose precise definition will be given in Section~\ref{subsec:korobov}. For now, it suffices to understand that the space $H_{\alpha,\bsgamma}$ is a normed space parametrized by the smoothness parameter $\alpha\geq 0$ and the weight parameters $\bsgamma=(\gamma_1,\gamma_2,\ldots)\in \RR_{\geq 0}^{\NN}$, where we denote the norm by $\|\cdot\|_{\alpha,\bsgamma}$.

We aim to approximate $I(f)$ using a quasi-Monte Carlo (QMC) rule over an $N$-element point set $P_N\subset [0,1]^d$:
\begin{align}\label{eq:qmc_rule}
    Q_{P_N}(f):=\frac{1}{N}\sum_{\bsx\in P_{N}}f(\bsx).
\end{align}
In the deterministic setting, as the most significant error criterion, the worst-case error of the QMC rule over $P_N$ in the space $H_{\alpha,\bsgamma}$ is defined as
\[ e^{\wor}(H_{\alpha,\bsgamma}; Q_{P_N}) := \sup_{\substack{f\in H_{\alpha,\bsgamma}\\ \|f\|_{\alpha,\bsgamma}\leq 1}}\left| I(f)-Q_{P_N}(f)\right|. \]
It is well-known that QMC rules using rank-1 lattice point sets, referred to as \emph{rank-1 lattice rules} for short, achieve nearly the optimal rate of the worst-case error of $O(N^{-\alpha})$, see, for instance, \cite[Chapters~2 \& 3]{dick2022lattice}. Here a rank-1 lattice point set, denoted as $P_{N,\bsz}$, is determined by the number of points $N$ and the generating vector $\bsz\in \{1,\ldots,N-1\}^d$. The precise definition of rank-1 lattice point sets is deferred to Section~\ref{subsec:lattice}. To search for good generating vectors, component-by-component (CBC) construction has been often used \cite{dick2004convergence,dick2006good,kuo2003component,sloan2002component}, which is a greedy algorithm that recursively looks for all possible candidates from the set $\{1,\ldots,N-1\}$ for $z_{j+1}$ while keeping the earlier components $z_1,\ldots,z_j$ fixed. As shown in  \cite{nuyens2006fast}, with the fast Fourier transform, the total construction cost of the CBC construction is given by $O(dN\log N)$. Stability of rank-1 lattice rules obtained by the CBC construction (with given $\alpha$ and $\bsgamma$) in the weighted Korobov spaces with the different smoothness and weight parameters has been studied in \cite{dick2021stability}.

In this paper, we are concerned with the randomized setting, where instead of fixing a point set $P_N$ deterministically, we allow for randomness in both the number of points $N$ and the point sets $P_N$. Specifically, we consider a randomized cubature rule defined by a pair of a probability space $(\Omega,\Sigma,\mu)$ and a family $(Q^{\omega})_{\omega\in \Omega}$ such that, for each fixed $\omega\in \Omega$, the corresponding element $Q^{\omega}$ is a deterministic cubature rule of the form \eqref{eq:qmc_rule}. In this context, we adopt two error criteria: the worst-case randomized error and the worst-case root-mean-squared error (RMSE), defined as
\[ e^{\ran}(H_{\alpha,\bsgamma}; (Q^{\omega})_{\omega\in \Omega}) := \sup_{\substack{f\in H_{\alpha,\bsgamma}\\ \|f\|_{\alpha,\bsgamma}\leq 1}}\int_{\Omega}\left| I(f)-Q^{\omega}(f)\right|\rd \mu(\omega) , \]
and
\[ e^{\rms}(H_{\alpha,\bsgamma}; (Q^{\omega})_{\omega\in \Omega}) := \sup_{\substack{f\in H_{\alpha,\bsgamma}\\ \|f\|_{\alpha,\bsgamma}\leq 1}}\left( \int_{\Omega}\left| I(f)-Q^{\omega}(f)\right|^2\rd \mu(\omega) \right)^{1/2}, \]
respectively.
One major advantage of a randomized rule over a deterministic rule is that an error estimator to assess the accuracy is often available for individual integrals $I(f)$. Furthermore, the optimal convergence rates of the randomized error criteria may improve compared to the deterministic worst-case error \cite{novak1988deterministic,novak2008tractability,novak2010tractability}. 

Building upon the classical work by Bahvalov \cite{bahvalov1961estimate}, Kritzer et al.\  proved in \cite{kritzer2019lattice} that, by choosing the generating vector $\bsz$ randomly from a set of good candidates, denoted by $\Zcal_{N,\eta,\alpha,\bsgamma}\subseteq \{1,\ldots,N-1\}^d$ for an additional parameter $\eta\in (0,1)$, with a randomly chosen prime $N\in \{\lceil M/2\rceil+1,\ldots,M\}$, a randomized rank-1 lattice rule achieves nearly the optimal rate of the randomized error of $O(M^{-\alpha-1/2})$ in the space $H_{\alpha,\bsgamma}$ for $\alpha> 1/2$. Here, the implicit constant is bounded independently of the dimension $d$ if $\sum_{j=1}^{\infty}\gamma_j^{1/\alpha}<\infty$. Moreover, with an additional random shift applied to a randomized rank-1 lattice point set, this result is extended to the RMSE in $H_{\alpha,\bsgamma}$ not only for $\alpha> 1/2$ but also for $0<\alpha\leq 1/2$. However, it is challenging to take a random sample from the uniform distribution over $\Zcal_{N,\eta,\alpha,\bsgamma}$ whose size is exponentially large in $d$, i.e., $|\Zcal_{N,\eta,\alpha,\bsgamma}|\geq \eta (N-1)^d$, making implementation of the algorithm difficult in practice.

There have been some recent works addressing this issue. In \cite{dick2022component}, Dick et al.\ introduced a randomized CBC construction to randomly choose a good generating vector $\bsz$ for the case $\alpha>1/2$. This is an online algorithm, where each sample $\bsz$ needs to be constructed at runtime with a computational cost of $O(d N \log N)$ by the fast Fourier transform approach from \cite{nuyens2006fast} for a randomly chosen $N$. In \cite{kuo2023random}, Kuo et al.\ eliminated the randomness in choosing a generating vector and demonstrated that, with a carefully constructed single generating vector $\bsz$, a randomized lattice rule with a randomly chosen prime $N\in \{\lceil M/2\rceil+1,\ldots,M\}$, which is the only source of randomness in the algorithm, still achieves nearly the optimal rate of the randomized error. The corresponding generating vector can be constructed offline ahead of time, while the CBC algorithm requires a computational cost of $O(dM^4/\log M)$. This result is further extended to the RMSE for all $\alpha>0$ by applying a random shift in \cite{nuyens2023randomised}. In passing, a randomized trapezoidal rule with random $N$ to approximate integrals with respect to one-dimensional Gaussian measure has been investigated in \cite{goda2023randomizing}.

In this paper, we revisit the property that $|\Zcal_{N,\eta,\alpha,\bsgamma}|\geq \eta (N-1)^d$, indicating that, with $\eta=1/2$, at least half of the possible choices for the generating vector $\bsz$ from $\{1,\ldots,N-1\}^d$ are already good. In fact, the set $\Zcal_{N,\eta,\alpha,\bsgamma}$ is defined as a collection of the generating vectors whose corresponding lattice rules achieve nearly the optimal order of the deterministic worst-case error in $H_{\alpha,\bsgamma}$. This property was previously shown in \cite[Theorem~2]{dick2006good} and has been utilized in a recent work by the author and L'Ecuyer \cite{goda2022construction}, where the median of several independently and randomly chosen rank-1 lattice rules was shown to be able to adjust to the smoothness and weight parameters of $H_{\alpha,\bsgamma}$ without any need to specify them. In this paper, we exploit the same property in the following way to introduce yet another randomized lattice rule: by independently and randomly drawing generating vectors $r$ times from the set $\{1,\ldots,N-1\}^d$ and then comparing the worst-case errors of rank-1 lattice rules with these generating vectors in $H_{\alpha,\bsgamma}$, the generating vector that minimizes the worst-case error among them belongs to the set $\Zcal_{N,\eta,\alpha,\bsgamma}$ with a very high probability $1-(1-\eta)^r$. Although there is an exponentially small failure probability, its contribution to the overall error bound can be mitigated through averaging over randomness with an appropriate choice of $r$.

The rest of this paper is organized as follows. In Section~\ref{sec:pre}, we provide the definitions of the weighted Korobov space $H_{\alpha,\bsgamma}$ and rank-1 lattice rules. Section~\ref{sec:new} introduces a new online randomized rank-1 lattice rule and proves that it achieves nearly the optimal rate of the randomized error of $O(M^{-\alpha-1/2})$ in $H_{\alpha,\bsgamma}$ for $\alpha> 1/2$. Furthermore, we extend this result to the RMSE for $\alpha> 1/2$ by applying a random shift. Finally, we conclude this paper with numerical experiments in Section~\ref{sec:numerics}, which provide empirical support for our theoretical findings.
\vspace{1em}

%------------------------------------------
\noindent \textbf{Notation.} Throughout this paper, we denote the set of integers by $\ZZ$ and the set of positive integers by $\NN$. We use the shorthand $\{1{:}d\}$ to denote the set $\{1,\ldots,d\}$. For any $u\subseteq \{1{:}d\}$ and a vector $\bsx\in \RR^d$, we use the notation $-u=\{1{:}d\}\setminus u$ to denote the complement of $u$, and $\bsx_u=(x_j)_{j\in u}$ to represent the sub-vector of $\bsx$ consisting of its components indexed by $u$. The vector with all elements being $0$ is denoted as $\bszero$. When considering the probability of a stochastic event $A$ with respect to a random variable $y$, we denote it by $\PP_y[A]$. If the random variable is clear from the context, we omit the subscript and simply write $\PP[A]$. This convention is also applied to the expectation $\EE$. Furthermore, we denote the indicator function of $A$ by $\chi(A)$, i.e., $\chi(A)=1$ if $A$ holds and $\chi(A)=0$ otherwise.

%------------------------------------------
%------------------------------------------
\section{Preliminaries}\label{sec:pre}

%------------------------------------------
\subsection{Weighted Korobov space}\label{subsec:korobov}
Let us consider a function $f: [0,1]^d\to \RR$ that is one-periodic in each variable, and assume that it has an absolutely convergent Fourier series:
\begin{align}\label{eq:fourier_series}
    f(\bsx)=\sum_{\bsk\in \ZZ^d}\hat{f}(\bsk)\, \re^{2\pi i \bsk\cdot \bsx},
\end{align}
where $\cdot$ represents the inner product between two vectors in $\RR^d$, and 
\[ \hat{f}(\bsk) := \int_{[0,1]^d}f(\bsx)\, \re^{-2\pi \ri \bsk\cdot \bsx}\rd \bsx\]
denotes the $\bsk$-th Fourier coefficient of $f$.

For a real number $\alpha>1/2$ and a sequence of non-negative real numbers $\bsgamma=(\gamma_1,\gamma_2,\ldots)\in \RR_{\geq 0}^{\NN}$, let $r_{\alpha,\bsgamma}: \ZZ^d\to \RR_{\geq 0}$ be defined by
\[ r_{\alpha,\bsgamma}(\bsk):=\prod_{\substack{j=1\\ k_j\neq 0}}^{d}\frac{|k_j|^{\alpha}}{\gamma_j},\]
where we set the empty product to $1$ and $|k_j|^{\alpha}/\gamma_j=\infty$ for any $k_j\in \ZZ$ if $\gamma_j=0$. 
Later in this paper, more precisely in our theoretical analysis on randomized error criteria in Section~\ref{subsec:error_bounds}, we assume that each $\gamma_j$ is bounded above by $1$. This ensures that $r_{\alpha,\bsgamma}(\bsk)\geq 1$ for all $\bsk$.

Then the weighted Korobov space $H_{\alpha,\bsgamma}$ is defined as a reproducing kernel Hilbert space with the inner product
\[ \langle f,g\rangle_{\alpha,\bsgamma}:= \sum_{\bsk\in \ZZ^d}\hat{f}(\bsk)\overline{g(\bsk)}(r_{\alpha,\bsgamma}(\bsk))^2, \]
and the reproducing kernel
\[ K_{\alpha,\bsgamma}(\bsx,\bsy)=\sum_{\bsk\in \ZZ^d}\frac{\re^{2\pi \ri \bsk\cdot (\bsx-\bsy)}}{(r_{\alpha,\bsgamma}(\bsk))^2}.\]
The induced norm is simply denoted by $\|f\|_{\alpha,\bsgamma}=\sqrt{\langle f,f\rangle_{\alpha,\bsgamma}}.$
Here we assume that $\hat{f}(\bsk)=0$ if $r_{\alpha,\bsgamma}(\bsk)=\infty$ and we interpret $\infty\cdot 0$ as equal to $0$ so that the corresponding frequencies do not contribute to the inner product or the norm of the function space.

Note that even for $0\leq \alpha\leq 1/2$, the weighted Korobov space can be defined as above, and it holds that $H_{\alpha,\bsgamma}\subseteq L_2([0,1]^d)$ with equality for $\alpha=0$ and $\gamma_j>0$ for all $j\in \NN$. For $0 \leq \alpha \leq 1/2$, the functions in $H_{\alpha,\bsgamma}$ are not necessarily periodic, unlike the case for $\alpha > 1/2$ where we assume periodicity. Moreover, the Fourier series of $f\in H_{\alpha,\bsgamma}$ does not necessarily converge absolutely if $0 \leq \alpha \leq 1/2$, and the equality in \eqref{eq:fourier_series} should be understood to hold almost everywhere. As we focus on the case $\alpha>1/2$ throughout this paper, the Fourier series of $f\in H_{\alpha,\bsgamma}$ always converges absolutely.

It is evident from the definition of $H_{\alpha,\bsgamma}$ that the parameter $\alpha$ controls how fast the Fourier coefficients decay, and that the sequence $\bsgamma$ moderates the relative importance of different variables, as considered in \cite{sloan1998when}. Moreover, the parameter $\alpha$ is directly related to the smoothness of periodic functions; for $\alpha\in \NN$, it holds that
\[ \|f\|_{\alpha,\bsgamma}^2 = \sum_{u\subseteq \{1{:}d\}}\frac{1}{(2\pi)^{2\alpha |u|}\gamma_u^2}\int_{[0,1]^{|u|}}\left| \int_{[0,1]^{d-|u|}}\frac{\partial^{\alpha |u|}}{\prod_{j\in u}\partial x_j^{\alpha}}f(\bsx)\rd \bsx_{-u}\right|^2\rd \bsx_u, \]
where we define $\gamma_u=\prod_{j\in u}\gamma_j$ for any $u\subseteq \{1{:}d\}$, with the empty product set to $1$. We refer to \cite[Propositions~2.4 \& 2.17]{dick2022lattice} for this equality.

%------------------------------------------
\subsection{Rank-1 lattice rule}\label{subsec:lattice}
For comprehensive information on lattice rules, we refer to the books by Niederreiter \cite{niederreiter1992random}, Sloan and Joe \cite{sloan1994lattice}, and Dick et al.\ \cite{dick2022lattice}. First, a rank-1 lattice point set is defined as follows.
\begin{definition}[Rank-1 lattice point set]
    For $N\in \NN$ with $N\geq 2$, let $\bsz=(z_1,\ldots,z_d)\in \{1,\ldots,N-1\}^d$ be a vector. The rank-1 lattice point set $P_{N,\bsz}$ is defined by
    \[ P_{N,\bsz}:=\left\{ \left(\left\{ \frac{nz_1}{N}\right\},\ldots,\left\{ \frac{nz_d}{N}\right\}\right) \,\colon \, 0\leq n<N\right\},\]
    where $\{x\}$ denotes the fractional part of $x$, i.e., $\{x\}:=x-\lfloor x\rfloor$ for real $x$. The QMC rule using $P_{N,\bsz}$ as a point set is called the rank-1 lattice rule with generating vector $\bsz$.
\end{definition}

The dual lattice of a rank-1 lattice point set, defined below, plays a key role in analyzing the integration error of a rank-1 lattice rule.
\begin{definition}[Dual lattice]
    For $N\in \NN$ with $N\geq 2$ and $\bsz\in \{1,\ldots,N-1\}^d$, the dual lattice of the rank-1 lattice point set $P_{N,\bsz}$ is defined as
    \[ P_{N,\bsz}^{\perp}:=\left\{ \bsk\in \ZZ^d \,\colon \, \bsk\cdot \bsz\equiv 0 \pmod N\right\}.\]
\end{definition}

The \emph{character property}, as described in the following lemma, is an essential characteristic of a rank-1 lattice point set. It indicates that a rank-1 lattice rule integrates the $\bsk$-th Fourier mode exactly if and only if either $\bsk=\bszero$ or $\bsk\not\in P_{N,\bsz}^{\perp}$. The proof can be found, for instance, in \cite[Lemma~1.9]{dick2022component}.
\begin{lemma}\label{lem:character}
    Let $N\in \NN$ with $N\geq 2$ and $\bsz\in \{1,\ldots,N-1\}^d$. For any $\bsk\in \ZZ^d$, it holds that
    \[ \frac{1}{N}\sum_{\bsx\in P_{N,\bsz}}\re^{2\pi \ri \bsk\cdot \bsx}=\begin{cases} 1 & \text{if $\bsk\in P_{N,\bsz}^{\perp}$,} \\ 0 & \text{otherwise.}\end{cases} \]
\end{lemma}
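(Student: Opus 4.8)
The plan is to parametrize the points of $P_{N,\bsz}$ explicitly and collapse the sum to a single finite geometric series. Write $\bsx_n := (\{nz_1/N\},\ldots,\{nz_d/N\})$ for $0\leq n<N$, so that $P_{N,\bsz}=\{\bsx_0,\ldots,\bsx_{N-1}\}$. The first step is to observe that, because $\bsk\in\ZZ^d$, the fractional parts can be dropped inside the exponential: for each coordinate $j$ one has $k_j(nz_j/N-\{nz_j/N\})=k_j\lfloor nz_j/N\rfloor\in\ZZ$, so that $\re^{2\pi\ri\bsk\cdot\bsx_n}=\re^{2\pi\ri n(\bsk\cdot\bsz)/N}$. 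This reduces the claim to evaluating
\[ \frac{1}{N}\sum_{\bsx\in P_{N,\bsz}}\re^{2\pi\ri\bsk\cdot\bsx}=\frac{1}{N}\sum_{n=0}^{N-1}\re^{2\pi\ri n(\bsk\cdot\bsz)/N}. \]

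Next I would set $m:=\bsk\cdot\bsz\in\ZZ$ and split into two cases. If $m\equiv 0 \Mod{N}$, then every summand equals $1$ and the average is $1$; and by the definition of the dual lattice, the condition $m\equiv 0\Mod{N}$ is precisely $\bsk\in P_{N,\bsz}^{\perp}$. If $m\not\equiv 0\Mod{N}$, then $\zeta:=\re^{2\pi\ri m/N}$ is an $N$-th root of unity different from $1$, so summing the geometric series gives
\[ \frac{1}{N}\sum_{n=0}^{N-1}\zeta^n=\frac{1}{N}\cdot\frac{\zeta^N-1}{\zeta-1}=\frac{1}{N}\cdot\frac{\re^{2\pi\ri m}-1}{\zeta-1}=0, \]
using that $m$ is an integer, hence $\re^{2\pi\ri m}=1$; this case is exactly $\bsk\notin P_{N,\bsz}^{\perp}$. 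Combining the two cases yields the stated dichotomy.

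There is essentially no obstacle here. The only two points deserving a line of justification are the legitimacy of discarding the fractional parts — which rests entirely on $\bsk$ having integer entries — and the fact that $\zeta\neq 1$ whenever $N$ does not divide $m$, so that the closed form for the geometric series applies; beyond the definitions of $P_{N,\bsz}$ and $P_{N,\bsz}^{\perp}$ no further machinery is required.
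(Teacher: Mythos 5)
Your proof is correct and is exactly the standard character-property argument: the paper does not spell out a proof but instead cites \cite[Lemma~1.9]{dick2022component}, whose argument is the same reduction to a geometric sum of $N$-th roots of unity after discarding the fractional parts. Nothing is missing (the implicit identification of the sum over $P_{N,\bsz}$ with the sum over $n=0,\ldots,N-1$ is the usual convention, and is unproblematic in the paper's setting where $N$ is prime).
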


It is known that the worst-case error of a rank-1 lattice rule in the weighted Korobov space can be explicitly given, as stated in the following lemma. The proof is available, for instance, in \cite[Section~2.5]{dick2022component}.
\begin{lemma}\label{lem:worst-case}
    Let $\alpha>1/2$ be a real number and $\bsgamma=(\gamma_1,\gamma_2,\ldots)\in \RR_{\geq 0}^{\NN}$ be a sequence of non-negative weights. For a rank-1 lattice rule with $N\in \NN$, $N\geq 2$, and $\bsz\in \{1,\ldots,N-1\}^d$, it holds that
    \[ \left(e^{\wor}(H_{\alpha,\bsgamma}; Q_{P_{N,\bsz}})\right)^2=\sum_{\bsk\in P_{N,\bsz}^{\perp}\setminus \{\bszero\}}\frac{1}{(r_{\alpha,\bsgamma}(\bsk))^2}.\]
    Moreover, if $\alpha\in \NN$, this formula can be written in the form
    \[ \left(e^{\wor}(H_{\alpha,\bsgamma}; Q_{P_{N,\bsz}})\right)^2=-1+\frac{1}{N}\sum_{\bsx\in P_{N,\bsz}}\prod_{j=1}^{d}\left[ 1+\gamma_j\frac{(-1)^{\alpha+1}(2\pi)^{2\alpha}}{(2\alpha)!}B_{2\alpha}(x_j)\right], \]
    which is computable in $O(dN)$ operations, where $B_{2\alpha}$ denotes the Bernoulli polynomial of degree $2\alpha$.
\end{lemma}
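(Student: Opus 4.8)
The plan is to prove the two claimed identities in turn, both resting on the character property (Lemma~\ref{lem:character}) and the explicit Fourier description of the reproducing kernel $K_{\alpha,\bsgamma}$.

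First I would establish the dual-lattice formula for the squared worst-case error. Since $H_{\alpha,\bsgamma}$ is a reproducing kernel Hilbert space with kernel $K_{\alpha,\bsgamma}$, the representer of the integration-error functional $f\mapsto I(f)-Q_{P_{N,\bsz}}(f)$ has squared norm equal to the squared worst-case error. Concretely, one expands
\[
\left(e^{\wor}(H_{\alpha,\bsgamma}; Q_{P_{N,\bsz}})\right)^2
= \int_{[0,1]^d}\int_{[0,1]^d} K_{\alpha,\bsgamma}(\bsx,\bsy)\rd\bsx\rd\bsy
- \frac{2}{N}\sum_{\bsx\in P_{N,\bsz}}\int_{[0,1]^d} K_{\alpha,\bsgamma}(\bsx,\bsy)\rd\bsy
+ \frac{1}{N^2}\sum_{\bsx,\bsy\in P_{N,\bsz}} K_{\alpha,\bsgamma}(\bsx,\bsy),
\]
then substitute the Fourier expansion $K_{\alpha,\bsgamma}(\bsx,\bsy)=\sum_{\bsk}(r_{\alpha,\bsgamma}(\bsk))^{-2}\re^{2\pi\ri\bsk\cdot(\bsx-\bsy)}$. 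The first term collapses to the $\bsk=\bszero$ contribution (only the constant mode survives integration), the cross term likewise leaves only $\bsk=\bszero$, and in the double sum Lemma~\ref{lem:character} applied in each variable forces $\bsk\in P_{N,\bsz}^{\perp}$, each such $\bsk$ contributing $(r_{\alpha,\bsgamma}(\bsk))^{-2}$. After cancellation of the $\bsk=\bszero$ terms one is left exactly with $\sum_{\bsk\in P_{N,\bsz}^{\perp}\setminus\{\bszero\}}(r_{\alpha,\bsgamma}(\bsk))^{-2}$. Interchanging sum and integral is justified by absolute convergence, which holds since $\alpha>1/2$.

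Next, for the second identity (the case $\alpha\in\NN$), I would go the other way: start from $\sum_{\bsk\in P_{N,\bsz}^{\perp}\setminus\{\bszero\}}(r_{\alpha,\bsgamma}(\bsk))^{-2}$, add and subtract the $\bsk=\bszero$ term, and use Lemma~\ref{lem:character} in reverse to rewrite $\sum_{\bsk\in P_{N,\bsz}^{\perp}}(r_{\alpha,\bsgamma}(\bsk))^{-2}=\frac{1}{N}\sum_{\bsx\in P_{N,\bsz}}\sum_{\bsk\in\ZZ^d}(r_{\alpha,\bsgamma}(\bsk))^{-2}\re^{2\pi\ri\bsk\cdot\bsx}$. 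The inner sum over $\bsk$ factorizes over the $d$ coordinates because $r_{\alpha,\bsgamma}$ is a product; in each coordinate one has $1+\gamma_j\sum_{k\neq 0}|k|^{-2\alpha}\re^{2\pi\ri k x_j}$. The classical Fourier-series identity for Bernoulli polynomials, $\sum_{k\in\ZZ\setminus\{0\}}k^{-2\alpha}\re^{2\pi\ri k x}=\frac{(-1)^{\alpha+1}(2\pi)^{2\alpha}}{(2\alpha)!}B_{2\alpha}(x)$ valid for integer $\alpha\geq 1$ and $x\in[0,1]$, turns each factor into $1+\gamma_j\frac{(-1)^{\alpha+1}(2\pi)^{2\alpha}}{(2\alpha)!}B_{2\alpha}(x_j)$. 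Subtracting the $\bsk=\bszero$ term contributes the $-1$, giving the stated closed form; the $O(dN)$ cost is immediate from the product-over-$j$, sum-over-$\bsx$ structure.

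I do not anticipate a serious obstacle here — this is a standard computation in the lattice-rules literature, and the excerpt explicitly points to \cite[Section~2.5]{dick2022component} and \cite[Propositions~2.4 \& 2.17]{dick2022lattice}. The only point requiring a little care is the bookkeeping of the $\bsk=\bszero$ mode across the three steps (it cancels in the first identity and produces the $-1$ in the second), together with citing the Bernoulli-polynomial Fourier identity rather than re-deriving it. In fact, given that both formulas are quoted verbatim from standard references, the cleanest exposition may simply be to state the derivation in a few lines as above and refer the reader to those sources for full detail.
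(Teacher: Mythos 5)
Your proposal is correct and is exactly the standard argument: the paper does not prove this lemma itself but only points to \cite[Section~2.5]{dick2022component} (and \cite{dick2022lattice}), and your sketch—kernel expansion of the squared worst-case error, the character property to reduce to the dual lattice, and the Bernoulli-polynomial Fourier identity for the computable form—is precisely the computation those sources carry out. One bookkeeping caveat: with this paper's own normalization $r_{\alpha,\bsgamma}(\bsk)=\prod_{j\colon k_j\neq 0}|k_j|^{\alpha}/\gamma_j$, one has $1/(r_{\alpha,\bsgamma}(\bsk))^2=\prod_{j\colon k_j\neq 0}\gamma_j^{2}|k_j|^{-2\alpha}$, so the coordinate-wise factor in your factorization comes out as $1+\gamma_j^{2}\sum_{k\neq 0}|k|^{-2\alpha}\re^{2\pi\ri k x_j}$, i.e.\ the weight enters squared; your first-power factor reproduces the lemma as printed (which follows the weight convention of the cited source) but not the paper's definition of $r_{\alpha,\bsgamma}$, so in a written-out proof you should fix one convention explicitly rather than letting the two powers silently coincide.
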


As mentioned in the previous section, the set $\Zcal_{N,\eta,\alpha,\bsgamma}$, with a sufficient cardinality, is a collection of the generating vectors $\bsz$ for which the corresponding rank-1 lattice rules achieve nearly the optimal order of $e^{\wor}(H_{\alpha,\bsgamma}; Q_{P_{N,\bsz}})$. The following proposition restates the result given in \cite[Theorem~2]{dick2006good}.
\begin{proposition}\label{prop:prob_1}
    Let $\alpha>1/2$ be a real number and $\bsgamma=(\gamma_1,\gamma_2,\ldots)\in \RR_{\geq 0}^{\NN}$ be a sequence of non-negative weights. Let $N\geq 2$ be a prime and $\eta\in (0,1)$. For $1/2\leq \lambda<\alpha$, we define
    \[ B_{N,\eta,\alpha,\bsgamma,\lambda}:=\left(\frac{1}{(1-\eta) (N-1)}\prod_{j=1}^{d}\left(1+2\gamma_j^{1/\lambda}\zeta(\alpha/\lambda)\right)\right)^{\lambda}, \]
    where $\zeta$ denotes the Riemann-zeta function, and further define
    \begin{align*}
        \Zcal_{N,\eta,\alpha,\bsgamma} & :=\left\{\bsz\in \{1,\dots,N-1\}^d \,\colon \,\right.\\
        & \qquad \qquad \left.e^{\wor}(H_{\alpha,\bsgamma};P_{N,\bsz})\leq B_{N,\eta,\alpha,\bsgamma,\lambda} \quad \text{for all $1/2\leq \lambda<\alpha$}\right\}.
    \end{align*}
    Suppose that $\bsz$ is a random variable following the uniform distribution over the set $\{1,\ldots,N-1\}^d$, Then it holds that
    \[ \PP\left[ \bsz\in \Zcal_{N,\eta,\alpha,\bsgamma}\right] \geq \eta, \]
    for any $0<\eta<1$. Equivalently, we have $|\Zcal_{N,\eta,\alpha,\bsgamma}|\geq \eta (N-1)^d$.
\end{proposition}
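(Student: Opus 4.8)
The plan is to bound the expected squared worst-case error over a uniformly random generating vector and then invoke Markov's inequality. First I would fix $\lambda$ with $1/2\leq\lambda<\alpha$ and, using Lemma~\ref{lem:worst-case}, write
\[
\EE_{\bsz}\left[\left(e^{\wor}(H_{\alpha,\bsgamma};Q_{P_{N,\bsz}})\right)^{2\lambda/(2\lambda)}\right]
\]
more usefully as $\EE_{\bsz}\big[(e^{\wor})^{1/\lambda}\big]$, where by Jensen's inequality (since $1/(2\lambda)\leq 1$) one has $(e^{\wor})^{1/\lambda}=\big((e^{\wor})^2\big)^{1/(2\lambda)}\leq\big(\sum_{\bsk\in P_{N,\bsz}^{\perp}\setminus\{\bszero\}}(r_{\alpha,\bsgamma}(\bsk))^{-2}\big)^{1/(2\lambda)}\leq\sum_{\bsk\in P_{N,\bsz}^{\perp}\setminus\{\bszero\}}(r_{\alpha,\bsgamma}(\bsk))^{-1/\lambda}$. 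Taking the expectation over $\bsz$ uniform on $\{1,\dots,N-1\}^d$ and exchanging sum and expectation gives
\[
\EE_{\bsz}\left[(e^{\wor})^{1/\lambda}\right]\leq\sum_{\bszero\neq\bsk\in\ZZ^d}\frac{1}{(r_{\alpha,\bsgamma}(\bsk))^{1/\lambda}}\,\PP_{\bsz}\big[\bsk\in P_{N,\bsz}^{\perp}\big].
\]

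The next step is the standard counting fact that, for $N$ prime and $\bsk\neq\bszero$ with $\bsk\not\equiv\bszero\pmod N$, the congruence $\bsk\cdot\bsz\equiv 0\pmod N$ has exactly $(N-1)^{d-1}$ solutions $\bsz\in\{1,\dots,N-1\}^d$ (at least one coordinate $k_j$ is a unit mod $N$, so $z_j$ is determined by the others), hence $\PP_{\bsz}[\bsk\in P_{N,\bsz}^{\perp}]\leq 1/(N-1)$; for $\bsk\equiv\bszero\pmod N$ the probability is at most $1$ but I would handle those terms by noting $r_{\alpha,\bsgamma}(\bsk)\geq N^{\alpha}$ there, or more simply just use the crude bound and absorb it. This yields
\[
\EE_{\bsz}\left[(e^{\wor})^{1/\lambda}\right]\leq\frac{1}{N-1}\sum_{\bszero\neq\bsk\in\ZZ^d}\frac{1}{(r_{\alpha,\bsgamma}(\bsk))^{1/\lambda}}
=\frac{1}{N-1}\left(\prod_{j=1}^{d}\Big(1+2\gamma_j^{1/\lambda}\zeta(\alpha/\lambda)\Big)-1\right)\leq\frac{1}{N-1}\prod_{j=1}^{d}\Big(1+2\gamma_j^{1/\lambda}\zeta(\alpha/\lambda)\Big),
\]
where the Fourier-coefficient sum factorizes because $r_{\alpha,\bsgamma}$ is a product over coordinates and $\sum_{k\neq 0}|k|^{-\alpha/\lambda}=2\zeta(\alpha/\lambda)$ converges since $\alpha/\lambda>1$.

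Finally I would apply Markov's inequality to the nonnegative random variable $(e^{\wor})^{1/\lambda}$: for the threshold $t_\lambda:=B_{N,\eta,\alpha,\bsgamma,\lambda}^{1/\lambda}=\big((1-\eta)(N-1)\big)^{-1}\prod_{j=1}^d(1+2\gamma_j^{1/\lambda}\zeta(\alpha/\lambda))$ we get
\[
\PP_{\bsz}\left[e^{\wor}(H_{\alpha,\bsgamma};Q_{P_{N,\bsz}})> B_{N,\eta,\alpha,\bsgamma,\lambda}\right]
=\PP_{\bsz}\left[(e^{\wor})^{1/\lambda}> t_\lambda\right]
\leq\frac{\EE_{\bsz}\big[(e^{\wor})^{1/\lambda}\big]}{t_\lambda}\leq 1-\eta.
\]
Since this holds for every $\lambda\in[1/2,\alpha)$, and we want the event where it holds simultaneously for all such $\lambda$, I would argue that the complement is a \emph{decreasing} family in an appropriate sense — actually the cleaner route is to observe that the bad event $\{\exists\lambda:\ e^{\wor}>B_{N,\eta,\alpha,\bsgamma,\lambda}\}$ need not be reduced to a single $\lambda$ by a union bound; instead one uses that it suffices to verify the inequality on a countable dense set and, by continuity of $\lambda\mapsto B_{N,\eta,\alpha,\bsgamma,\lambda}$, the failure at some $\lambda$ already follows from the single Markov estimate at that same $\lambda$ after taking a supremum. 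The main obstacle is precisely this point: getting the ``for all $\lambda$'' statement with probability at least $\eta$ rather than losing a union bound over infinitely many $\lambda$. I expect this is resolved by noting that $\Zcal_{N,\eta,\alpha,\bsgamma}$ can equivalently be described via the single inequality $e^{\wor}\le B_{N,\eta,\alpha,\bsgamma,\lambda^*}$ at the worst (largest-ratio) $\lambda^*$, or — as in \cite{dick2006good} — by redefining things so that only one Markov bound is needed; I would follow that reduction and then the displayed estimate closes the argument, giving $\PP[\bsz\in\Zcal_{N,\eta,\alpha,\bsgamma}]\geq\eta$, which is equivalent to $|\Zcal_{N,\eta,\alpha,\bsgamma}|\geq\eta(N-1)^d$ since $\bsz$ is uniform.
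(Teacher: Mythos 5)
First, note that the paper does not actually prove this proposition: it is quoted verbatim from \cite[Theorem~2]{dick2006good}. Your skeleton --- the inequality $\bigl((e^{\wor})^2\bigr)^{1/(2\lambda)}\leq\sum_{\bsk\in P_{N,\bsz}^{\perp}\setminus\{\bszero\}}(r_{\alpha,\bsgamma}(\bsk))^{-1/\lambda}$, averaging over $\bsz$, then Markov --- is indeed the standard route behind that result, but as written your argument has two genuine gaps.

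The step you yourself flag as the main obstacle, the ``for all $\lambda$'' statement, is left unproved: you defer to the cited paper (``I would follow that reduction''), and your surrounding remarks about countable dense sets, suprema and union bounds do not close it. The fix is simpler than you suggest and must be stated explicitly: the left-hand side $e^{\wor}(H_{\alpha,\bsgamma};Q_{P_{N,\bsz}})$ does not depend on $\lambda$, so ``$e^{\wor}\leq B_{N,\eta,\alpha,\bsgamma,\lambda}$ for all $\lambda\in[1/2,\alpha)$'' is the single condition $e^{\wor}\leq\inf_{\lambda}B_{N,\eta,\alpha,\bsgamma,\lambda}$; since $\lambda\mapsto B_{N,\eta,\alpha,\bsgamma,\lambda}$ is continuous and blows up as $\lambda\to\alpha^{-}$, one application of your Markov estimate at a (near-)minimizing $\lambda$ suffices. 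Equivalently (the quantile form of the argument): for each fixed $\lambda$, Markov forces the $\lceil\eta(N-1)^d\rceil$-th smallest worst-case error to be at most $B_{N,\eta,\alpha,\bsgamma,\lambda}$, and the set of the $\lceil\eta(N-1)^d\rceil$ best vectors does not depend on $\lambda$, so these vectors lie in $\Zcal_{N,\eta,\alpha,\bsgamma}$. Without one of these observations the proposition is not established.

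The second gap is the displayed mean bound. You use $\PP_{\bsz}[\bsk\in P_{N,\bsz}^{\perp}]\leq 1/(N-1)$ for \emph{all} $\bsk\neq\bszero$, but for $\bsk\equiv\bszero\pmod N$ this probability equals $1$, and neither of your patches recovers the stated constant. Writing $\bsk=N\bsell$ and using $r_{\alpha,\bsgamma}(N\bsell)\geq N^{\alpha}r_{\alpha,\bsgamma}(\bsell)$, the aliasing contribution is $\prod_{j=1}^{d}\bigl(1+2\gamma_j^{1/\lambda}\zeta(\alpha/\lambda)N^{-\alpha/\lambda}\bigr)-1$, which is \emph{not} $O(1/(N-1))$ uniformly in $d$; so ``absorbing'' it yields at best something like $\EE_{\bsz}[(e^{\wor})^{1/\lambda}]\leq\frac{2}{N-1}\prod_j\bigl(1+2\gamma_j^{1/\lambda}\zeta(\alpha/\lambda)\bigr)$, i.e.\ the proposition with $\frac{2}{(1-\eta)(N-1)}$ in place of $\frac{1}{(1-\eta)(N-1)}$. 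Getting the constant as stated requires a sharper treatment of the frequencies $\bsk\equiv\bszero\pmod N$ (e.g.\ the exact character-sum evaluation of $\PP_{\bsz}[\bsk\cdot\bsz\equiv 0\pmod N]$ as in the cited proof), not the crude bound. A minor related slip: the congruence has \emph{at most} $(N-1)^{d-1}$ solutions in $\{1,\ldots,N-1\}^d$, not exactly, since the unique solution modulo $N$ in the distinguished coordinate may be $0$; this is harmless because you only use the upper bound.
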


In this proposition, the bound $B_{N,\eta,\alpha,\bsgamma,\lambda}$ on the worst-case error approaches the rate $O(N^{-\alpha})$ arbitrarily closely as a function of $N$ when $\lambda\to \alpha^{-}$. Moreover, it can be shown that this bound is further bounded independently of the dimension $d$ for all $1/2\leq \lambda<\alpha$ if $\sum_{j=1}^{\infty}\gamma_j^{1/\alpha}<\infty$, since then
\[ \prod_{j=1}^{d}\left(1+2\gamma_j^{1/\lambda}\zeta(\alpha/\lambda)\right)\leq \re^{2\zeta(\alpha/\lambda)\sum_{j=1}^{d}\gamma_j^{1/\lambda}}\leq \re^{2\zeta(\alpha/\lambda)\sum_{j=1}^{\infty}\gamma_j^{1/\alpha}},\]
but, unfortunately, $\zeta(\alpha/\lambda)\to \infty$ if $\lambda\to \alpha^{-}$.

%------------------------------------------
%------------------------------------------
\section{A randomized lattice rule}\label{sec:new}

With the above preliminaries in mind, we introduce a new randomized lattice rule in Section~\ref{subsec:alg} and then analyze two error criteria, namely, the randomized error and the RMSE, in the weighted Korobov space $H_{\alpha,\bsgamma}$ in Section~\ref{subsec:error_bounds}. Additional remarks are collected in Section~\ref{subsec:remarks}.

%------------------------------------------
\subsection{Algorithm and basic properties}\label{subsec:alg}
Let us consider the following algorithm:
\begin{algorithm}\label{alg:without}
Let $\alpha>1/2$ be a real number and $\bsgamma=(\gamma_1,\gamma_2,\ldots)\in \RR_{\geq 0}^{\NN}$ be a sequence of non-negative weights. Given $r,M\in \NN$, with $M\geq 2$, proceed as follows:
\begin{enumerate}
    \item Randomly draw $N\in \NN$ from the uniform distribution over the set 
    \[ \Pcal_M:=\left\{ p\in \NN_p \,\colon \, \lceil M/2\rceil <p\leq M \right\}, \]
    where $\NN_p$ denotes the set of prime numbers.
    \item Independently and randomly draw $r$ vectors, denoted by $\bsz_1,\ldots,\bsz_r$, from the uniform distribution over the set $\{1,\ldots,N-1\}^d.$
    \item Choose $\bsz^*$ that minimizes the worst-case error among $\bsz_1,\ldots,\bsz_r$:
    \[ \bsz^*:=\arg\min_{1\leq j\leq r}e^{\wor}(H_{\alpha,\bsgamma}; P_{N,\bsz_j}). \]
\end{enumerate}
\end{algorithm}

Our new, online randomized rank-1 lattice rule (without shift), denoted simply by $A^{\ran}_{r,M}$, is defined as a family of $Q_{P_{N,\bsz^*}}$, where $N$ and $\bsz^*$ are random variables. Here, $N$ is always a prime number, as it is drawn from the set $\Pcal_M$, consisting of primes in the range $\lceil M/2\rceil < p \leq M$. ``Online'' refers to the fact that the selection of the generating vectors needs to be performed at runtime, rather than being predetermined beforehand. Specifically, the realizations of $N$ and $\bsz^*$ are generated as described in Algorithm~\ref{alg:without} with inputs $\alpha,\bsgamma,r,M$.
With an additional random shift, in which case our new randomized lattice rule is denoted by $\tilde{A}^{\ran}_{r,M}$, we draw $\bsdelta=(\Delta_1,\ldots,\Delta_d)$ randomly from the uniform distribution over $[0,1]^d$ and $Q_{P_{N,\bsz^*}}$ is replaced by $Q_{P_{N,\bsz^*}+\bsdelta}$, where
\[ P_{N,\bsz^*}+\bsdelta := \left\{ \left(\{x_1+\Delta_1\}, \ldots, \{x_d+\Delta_d\}\right)\,\colon \, \bsx\in P_{N,\bsz^*} \right\}.\]
Here, we recall that $\{x\}$ denotes the fractional part of a real number $x$. 

Before moving on to the analysis of error criteria, we show some basic properties of our new randomized lattice rule.

\begin{lemma}\label{lem:prob_r}
    Let $\alpha>1/2$ be a real number and $\bsgamma\in \RR_{\geq 0}^{\NN}$ be a sequence of non-negative weights. Furthermore, let $r,M\in \NN$, with $M\geq 2$, be given. For $N$ and $\bsz^*$ being drawn by Algorithm~\ref{alg:without}, it holds that
    \[ \PP_{N,\bsz_1,\ldots,\bsz_r}\left[ \bsz^*\in \Zcal_{N,\eta,\alpha,\bsgamma}\right] \geq 1-(1-\eta)^r, \]
    for any $0<\eta<1$, where $\Zcal_{N,\eta,\alpha,\bsgamma}$ is defined in Proposition~\ref{prop:prob_1}.
\end{lemma}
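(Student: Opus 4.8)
The plan is to condition on the value of $N$ and then use the independence of the $r$ draws $\bsz_1,\ldots,\bsz_r$. Fix any prime $N\in\Pcal_M$. Since $N$ is prime, Proposition~\ref{prop:prob_1} applies and tells us that if $\bsz$ is uniform over $\{1,\ldots,N-1\}^d$, then $\PP[\bsz\in\Zcal_{N,\eta,\alpha,\bsgamma}]\geq\eta$ for any $0<\eta<1$. The $r$ vectors $\bsz_1,\ldots,\bsz_r$ in Step~(2) are drawn independently from exactly this distribution, so the events $\{\bsz_j\notin\Zcal_{N,\eta,\alpha,\bsgamma}\}$, $j=1,\ldots,r$, are mutually independent, each with probability at most $1-\eta$.

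The key observation is that $\bsz^*\in\Zcal_{N,\eta,\alpha,\bsgamma}$ whenever \emph{at least one} of $\bsz_1,\ldots,\bsz_r$ lies in $\Zcal_{N,\eta,\alpha,\bsgamma}$. Indeed, suppose some $\bsz_i\in\Zcal_{N,\eta,\alpha,\bsgamma}$. By definition of $\bsz^*$ as the minimizer of the worst-case error over the $r$ candidates, we have $e^{\wor}(H_{\alpha,\bsgamma};P_{N,\bsz^*})\leq e^{\wor}(H_{\alpha,\bsgamma};P_{N,\bsz_i})\leq B_{N,\eta,\alpha,\bsgamma,\lambda}$ for all $1/2\leq\lambda<\alpha$, where the last inequality is the membership condition $\bsz_i\in\Zcal_{N,\eta,\alpha,\bsgamma}$. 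Hence $\bsz^*$ satisfies the same defining inequalities and therefore $\bsz^*\in\Zcal_{N,\eta,\alpha,\bsgamma}$. (Note that $\bsz^*$ is itself one of the $\bsz_j$, hence automatically lies in $\{1,\ldots,N-1\}^d$, so the only thing to check is the error bound.)

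Combining these two points: conditionally on $N$,
\[
    \PP_{\bsz_1,\ldots,\bsz_r}\left[\bsz^*\in\Zcal_{N,\eta,\alpha,\bsgamma}\mid N\right]
    \geq \PP\left[\exists\, j:\ \bsz_j\in\Zcal_{N,\eta,\alpha,\bsgamma}\right]
    = 1-\prod_{j=1}^{r}\PP\left[\bsz_j\notin\Zcal_{N,\eta,\alpha,\bsgamma}\right]
    \geq 1-(1-\eta)^r,
\]
where the equality uses independence and the final inequality uses $\PP[\bsz_j\notin\Zcal_{N,\eta,\alpha,\bsgamma}]\leq 1-\eta$ from Proposition~\ref{prop:prob_1}. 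Since this bound holds uniformly over all realizations of $N\in\Pcal_M$, taking the expectation over $N$ (equivalently, using the law of total probability over the uniform choice of $N$) preserves the inequality, giving $\PP_{N,\bsz_1,\ldots,\bsz_r}[\bsz^*\in\Zcal_{N,\eta,\alpha,\bsgamma}]\geq 1-(1-\eta)^r$, as claimed.

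There is no real obstacle here; the statement is essentially a restatement of Proposition~\ref{prop:prob_1} boosted by independent repetition, and the only mildly delicate point is the monotonicity argument showing that the minimizer $\bsz^*$ inherits membership in $\Zcal_{N,\eta,\alpha,\bsgamma}$ from any single good draw — which works precisely because $\Zcal_{N,\eta,\alpha,\bsgamma}$ is defined by an upper bound on the worst-case error, a quantity $\bsz^*$ minimizes by construction. One should also remember to note explicitly that $\Pcal_M$ is nonempty (so that Step~(1) is well-defined), which follows from Bertrand's postulate for $M\geq 2$.
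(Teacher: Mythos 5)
Your proof is correct and follows essentially the same route as the paper: condition on $N$, use that the minimizer $\bsz^*$ inherits the worst-case-error bound (and hence membership in $\Zcal_{N,\eta,\alpha,\bsgamma}$) from any single good draw, and then boost via the conditional independence of $\bsz_1,\ldots,\bsz_r$ together with Proposition~\ref{prop:prob_1} before averaging over $N$. The only difference is cosmetic: you spell out the monotonicity argument that the paper leaves implicit, and you use the inclusion $\{\exists j:\bsz_j\in\Zcal_{N,\eta,\alpha,\bsgamma}\}\subseteq\{\bsz^*\in\Zcal_{N,\eta,\alpha,\bsgamma}\}$ as an inequality where the paper states it as an equivalence.
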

\begin{proof}
    From the definitions of $\bsz^*$ and $\Zcal_{N,\eta,\alpha,\bsgamma}$, we observe that $\bsz^*\not\in \Zcal_{N,\eta,\alpha,\bsgamma}$ if and only if $\bsz_j\not\in \Zcal_{N,\eta,\alpha,\bsgamma}$ holds for all $1\leq j\leq r$. By using the independence between $\bsz_1,\ldots,\bsz_r$ (conditional on $N$) and Proposition~\ref{prop:prob_1}, we have
    \begin{align*}
        \PP\left[ \bsz^*\in \Zcal_{N,\eta,\alpha,\bsgamma}\right] & = 1- \PP\left[ \bsz^*\not \in \Zcal_{N,\eta,\alpha,\bsgamma}\right] \\
        & = 1- \PP\left[ \bsz_1\not \in \Zcal_{N,\eta,\alpha,\bsgamma},\ldots,\bsz_r\not \in \Zcal_{N,\eta,\alpha,\bsgamma}\right] \\
        & = 1- \EE_N\left[\left(\PP_{\bsz_1\mid N}\left[ \bsz_1\not \in \Zcal_{N,\eta,\alpha,\bsgamma}\right]\right)^r\right]\\
        & = 1- \EE_N\left[\left(1-\PP_{\bsz_1\mid N}\left[ \bsz_1\in \Zcal_{N,\eta,\alpha,\bsgamma}\right]\right)^r\right] \\
        & \geq 1-(1-\eta)^r,
    \end{align*}
    where $\PP_{\bsz_1\mid N}[\bullet]$ denotes the conditional probability with respect to $\bsz_1$ given $N$.
    Hence, the proof is complete.
\end{proof}

\begin{lemma}\label{lem:prob_in_dual}
    For given $r,M\in \NN$, with $M\geq 2$, let $N$ and $\bsz^*$ be drawn according to Algorithm~\ref{alg:without} with any $\alpha>1/2$ and sequence $\bsgamma\in \RR_{\geq 0}^{\NN}$. Then, there exists a constant $c>0$ such that
    \[ \PP_{N,\bsz_1,\ldots,\bsz_r}\left[ \bsk\in P_{N,\bsz^*}^{\perp}\right] \leq c\frac{r}{M}\log(1+\|\bsk\|_{\infty}) \]
    holds for any $\bsk\in \ZZ^d\setminus \{\bszero\}$, where $\|\bsk\|_{\infty}:=\max_{j}|k_j|$ denotes the maximum norm of a vector.
\end{lemma}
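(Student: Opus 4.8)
The plan is to fix $\bsk\in\ZZ^d\setminus\{\bszero\}$ and condition on the prime $N$ drawn in step~(1). Conditionally on $N$, the vectors $\bsz_1,\ldots,\bsz_r$ are i.i.d.\ uniform on $\{1,\ldots,N-1\}^d$, and since $\bsz^*$ is always one of them, a union bound gives
\[
\PP\left[\bsk\in P_{N,\bsz^*}^{\perp}\,\middle|\,N\right]\le \sum_{j=1}^{r}\PP\left[\bsk\in P_{N,\bsz_j}^{\perp}\,\middle|\,N\right]= r\,\PP_{\bsz\mid N}\left[\bsk\cdot\bsz\equiv 0\Mod{N}\right].
\]
So the first key step is to bound, for a single uniform $\bsz$, the probability that $\bsk\cdot\bsz\equiv 0\pmod N$. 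Because $N$ is prime, this is a standard counting argument: if $\bsk\equiv\bszero\pmod N$ then the probability is $1$ but this forces $\|\bsk\|_\infty\ge N>M/2$, which is handled separately (the $\log(1+\|\bsk\|_\infty)$ factor easily absorbs it, possibly by enlarging $c$); otherwise some $k_j\not\equiv 0\pmod N$, and then for each fixed choice of the remaining $d-1$ components there is at most one value of $z_j\in\{1,\ldots,N-1\}$ (actually among $\{0,\ldots,N-1\}$, and excluding $0$ can only help) solving the congruence, giving $\PP_{\bsz\mid N}[\bsk\cdot\bsz\equiv 0\Mod{N}]\le 1/(N-1)$.

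The second key step is to average over $N$: we need $\EE_N[1/(N-1)]\le c'/M$. Since $N$ ranges over primes in $(\lceil M/2\rceil,M]$, every such $N$ satisfies $N-1>M/2-1$, so $1/(N-1)< 2/(M-2)\le c'/M$ for $M\ge 4$ (small $M$ being absorbed into the constant). Combining, $\PP[\bsk\in P_{N,\bsz^*}^{\perp}]\le r\cdot c'/M$ for all $\bsk$ not divisible by $N$.

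The remaining (and only mildly delicate) step is to see where the $\log(1+\|\bsk\|_\infty)$ factor is actually needed: it is needed precisely to cover the ``bad'' primes $N$ dividing every component of $\bsk$ — equivalently $N\mid\gcd(k_1,\ldots,k_d)$, in particular $N\le\|\bsk\|_\infty$. For such $N$ the conditional probability is $1$, not $1/(N-1)$, but the number of primes in $(\lceil M/2\rceil,M]$ that can divide a fixed nonzero integer $g=\gcd(\bsk)$ is at most $\log_2|g|\le\log_2\|\bsk\|_\infty$, while $|\Pcal_M|$ grows like $M/\log M$ by the prime number theorem (and is at least some absolute constant times $M/\log M$, e.g.\ by Bertrand-type bounds), so the contribution of these bad primes to $\PP_N[\,\cdot\,]$ is $O\!\big(\tfrac{\log\|\bsk\|_\infty}{M/\log M}\big)$; one checks this is $O\!\big(\tfrac{\log(1+\|\bsk\|_\infty)}{M}\big)$ after absorbing the extra $\log M$, and multiplying by $r$ from the union bound yields the claimed bound with a suitable absolute constant $c$. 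I expect the main obstacle to be bookkeeping the two regimes cleanly — the generic primes contributing the $r/M$ term and the finitely many bad primes contributing the $\log(1+\|\bsk\|_\infty)$ enhancement — and making sure the final constant $c$ can be chosen uniformly in $d$, $M$, $r$ and $\bsk$; the number-theoretic inputs (primality of $N$ for the one-solution count, and a lower bound on $|\Pcal_M|$) are routine.
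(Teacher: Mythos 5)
Your overall route is the same as the paper's: condition on $N$, bound the single-vector probability $\PP_{\bsz\mid N}[\bsk\cdot\bsz\equiv 0\ (\mathrm{mod}\ N)]$ by $1$ when $N$ divides every component of $\bsk$ and by $1/(N-1)$ otherwise (using primality and the one-solution count), pick up a factor $r$ (the paper gets it from independence and $1-(1-x)^r\le rx$, you from a union bound over $\bsz_1,\ldots,\bsz_r$ -- these are interchangeable here), and then average over $N\in\Pcal_M$, splitting off the ``bad'' primes dividing $\bsk$. The paper simply cites \cite{dick2022component,kritzer2019lattice,kuo2023random} for the bad-prime term $c'\log(\|\bsk\|_\infty)/M$, whereas you sketch it yourself, which is where the one real problem lies.

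Your bad-prime estimate as stated does not close. You count at most $\log_2\|\bsk\|_{\infty}$ bad primes and divide by $|\Pcal_M|\gtrsim M/\log M$, obtaining a contribution of order $\log(\|\bsk\|_\infty)\,\log M/M$, and then claim the extra $\log M$ can be ``absorbed''. It cannot: the constant $c$ in the lemma must be independent of $M$, and $\log(\|\bsk\|_\infty)\log M/M$ is not $O(\log(1+\|\bsk\|_\infty)/M)$ uniformly in $M$ (take $\|\bsk\|_\infty$ fixed and $M\to\infty$ to see the ratio blow up). The standard fix, which is exactly what the cited works use, is to exploit that every bad prime lies in $(\lceil M/2\rceil, M]$: a fixed nonzero component $k_{j_0}$ (or the gcd) can have at most $\log|k_{j_0}|/\log(M/2)\le \log\|\bsk\|_\infty/\log(M/2)$ prime divisors exceeding $M/2$, and this $\log(M/2)$ in the denominator cancels the $\log M$ coming from the Chebyshev-type lower bound $|\Pcal_M|\ge c\,M/\log M$, giving the required $O(\log(1+\|\bsk\|_\infty)/M)$ with an absolute constant. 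With that correction (and keeping, as you do, the conditional probability capped at $1$ rather than $r/(N-1)$ on the bad primes), your argument matches the paper's proof.
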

\begin{proof}
    From the definition of $\bsz^*$, it is evident that if $\bsk\not\in P_{N,\bsz_j}^{\perp}$ for all $1\leq j\leq r$, then $\bsk\not\in P_{N,\bsz^*}^{\perp}$. By using this trivial fact and the independence between $\bsz_1,\ldots,\bsz_r$ (conditional on $N$), we obtain
    \begin{align*}
    \PP\left[\bsk\in P_{N,\bsz^*}^{\perp}\right] & = 1-\PP\left[\bsk\not \in P_{N,\bsz^*}^{\perp}\right] \\
    & \leq  1-\PP\left[\bsk\not \in P_{N,\bsz_1}^{\perp}, \ldots, \bsk\not \in P_{N,\bsz_r}^{\perp}\right] \\
    & = 1-\EE_{N}\left[ \left(\PP_{\bsz_1\mid N}\left[\bsk\not \in P_{N,\bsz_1}^{\perp}\right] \right)^r \right] \\
    & = 1-\EE_{N}\left[ \left(1-\PP_{\bsz_1\mid N}\left[\bsk \in P_{N,\bsz_1}^{\perp}\right] \right)^r \right] .
    \end{align*}
    Recall that the condition $\bsk \in P_{N,\bsz_1}^{\perp}$ is equivalent to $\bsk\cdot \bsz_1\equiv 0\pmod N$, which holds for all $\bsz_1\in \{1,\ldots,N-1\}^d$ if $\bsk\equiv \bszero \pmod N$, i.e., if $k_j$ is a multiple of $N$ for all $j$. Otherwise, if $\bsk\not\equiv \bszero \pmod N$, i.e., if there exists at least one index $\ell$ such that $k_{\ell}$ is not any multiple of $N$, then the condition $\bsk \in P_{N,\bsz_1}^{\perp}$ is further equivalent to $k_{\ell}z_{1,\ell}\equiv -\bsk_{-\{\ell\}}\cdot \bsz_{1,-\{\ell\}}\pmod N$, which has at most one solution $\tilde{z}_{1,\ell}\in \{1,\dots,N-1\}$ for any $\bsz_{1,-\{\ell\}}\in \{1,\ldots,N-1\}^{d-1}$ since $N$ is a prime. This implies that we have
    \[ \PP_{\bsz_1\mid N}\left[\bsk \in P_{N,\bsz_1}^{\perp}\right] \leq \begin{cases} 1 & \text{if $\bsk\equiv \bszero \Mod{N}$,}\\ (N-1)^{-1} & \text{otherwise.} \end{cases} \]

    Applying this bound, we obtain
    \begin{align*}
    \PP\left[\bsk\in P_{N,\bsz^*}^{\perp}\right] & \leq 1-\frac{1}{|\Pcal_M|}\sum_{\substack{N\in \Pcal_M\\ \bsk\not\equiv \bszero \Mod{N}}}\left(1-\frac{1}{N-1} \right)^r \\
    & = \frac{1}{|\Pcal_M|}\left[\sum_{\substack{N\in \Pcal_M\\ \bsk\equiv \bszero \Mod{N}}}1+\sum_{\substack{N\in \Pcal_M\\ \bsk\not\equiv \bszero \Mod{N}}}\left(1-\left(1-\frac{1}{N-1} \right)^r\right)\right] \\
    & \leq \frac{1}{|\Pcal_M|}\sum_{\substack{N\in \Pcal_M\\ \bsk\equiv \bszero \Mod{N}}}1+\frac{1}{|\Pcal_M|}\sum_{\substack{N\in \Pcal_M\\ \bsk\not\equiv \bszero \Mod{N}}}\frac{r}{N-1}\\
    & \leq \frac{1}{|\Pcal_M|}\sum_{\substack{N\in \Pcal_M\\ \bsk\equiv \bszero \Mod{N}}}1+\frac{r}{\lceil M/2\rceil}.
    \end{align*}
    It has been already known from \cite{dick2022component,kritzer2019lattice,kuo2023random} that the first term on the right-most side above is bounded above by $c'\log(\|\bsk\|_{\infty})/M$ for a constant $c'>0$ independent of $\bsk$. Thus, we have
    \[ \PP\left[\bsk\in P_{N,\bsz^*}^{\perp}\right]\leq c'\frac{\log(\|\bsk\|_{\infty})}{M}+\frac{2r}{M}\leq  c\frac{r}{M}\log(1+\|\bsk\|_{\infty}),\]
    for a constant $c>0$ independent of $\bsk$, which completes the proof.
\end{proof}

%------------------------------------------
\subsection{Error bounds}\label{subsec:error_bounds}
Here we present an upper bound on the worst-case randomized error of our randomized rank-1 lattice rule $A^{\ran}_{r,M}$ in the weighted Korobov space $H_{\alpha,\bsgamma}$, and then extend it to the worst-case RMSE when an additional random shift is applied. Recalling the notation introduced thus far, the worst-case randomized error of $A^{\ran}_{r,M}$ (our randomized lattice rule \emph{without} shift) is expressed as
\[ e^{\ran}(H_{\alpha,\bsgamma}; A^{\ran}_{r,M}) = \sup_{\substack{f\in H_{\alpha,\bsgamma}\\ \|f\|_{\alpha,\bsgamma}\leq 1}}\EE_{N,\bsz_1,\ldots,\bsz_r}\left[ |I(f)-Q_{P_{N,\bsz^*}}(f)|\right], \]
and the worst-case RMSE of $\tilde{A}^{\ran}_{r,M}$ (our randomized lattice rule \emph{with} shift) is 
\[ e^{\rms}(H_{\alpha,\bsgamma}; \tilde{A}^{\ran}_{r,M}) = \sup_{\substack{f\in H_{\alpha,\bsgamma}\\ \|f\|_{\alpha,\bsgamma}\leq 1}}\left(\EE_{N,\bsz_1,\ldots,\bsz_r,\bsdelta}\left[ |I(f)-Q_{P_{N,\bsz^*}+\bsdelta}(f)|^2\right]\right)^{1/2}. \]

The following theorem provides an upper bound on $e^{\ran}(H_{\alpha,\bsgamma}; A^{\ran}_{r,M})$.
\begin{theorem}\label{thm:randomized_error}
    Let $\alpha>1/2$ be a real number and $\bsgamma\in \RR_{\geq 0}^{\NN}$ be a sequence of non-negative weights, with each $\gamma_j$ bounded above by $1$. Furthermore, let $r, M\in \NN$, with $M\geq 2$, be given. Assume that $M$ and $\eta\in (0,1)$ satisfy
    \begin{align}\label{eq:number_points}
        (1-\eta)M \geq \inf_{1/2\leq \lambda<\alpha}\, 2\prod_{j=1}^{d}\left(1+2\gamma_j^{1/\lambda}\zeta(\alpha/\lambda)\right).
    \end{align} 
    Then it holds that
    \begin{align*}
        e^{\ran}(H_{\alpha,\bsgamma}; A^{\ran}_{r,M}) & \leq \frac{rC_{\lambda,\delta}}{(1-\eta)^{\lambda-\delta-1/2}M^{\lambda-\delta+1/2}}\prod_{j=1}^{d}\left(1+2\gamma_j^{1/\lambda}\zeta(\alpha/\lambda)\right)^{\lambda-\delta}\\
        & \qquad + (1-\eta)^r \prod_{j=1}^{d}\left(1+2\gamma_j^{2}\zeta(2\alpha)\right)^{1/2}, 
        \end{align*}
        for any $1/2<\lambda<\alpha$ and $0<\delta<\min(\lambda-1/2,1)$, where $C_{\lambda,\delta}$ is a positive constant depending only on $\lambda$ and $\delta$.
\end{theorem}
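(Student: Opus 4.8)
The plan is to pass to Fourier coefficients, split the expectation over $(N,\bsz_1,\dots,\bsz_r)$ according to the event $\mathcal{G}:=\{\bsz^*\in\Zcal_{N,\eta,\alpha,\bsgamma}\}$, and estimate the two contributions separately. Fix $f$ with $\|f\|_{\alpha,\bsgamma}\le1$; since $\alpha>1/2$ its Fourier series converges absolutely, so Lemma~\ref{lem:character} gives $I(f)-Q_{P_{N,\bsz^*}}(f)=-\sum_{\bsk\in P_{N,\bsz^*}^{\perp}\setminus\{\bszero\}}\hat f(\bsk)$, hence $|I(f)-Q_{P_{N,\bsz^*}}(f)|\le\sum_{\bsk\in P_{N,\bsz^*}^{\perp}\setminus\{\bszero\}}|\hat f(\bsk)|$. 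On $\mathcal{G}^{c}$ I would simply bound $|I(f)-Q_{P_{N,\bsz^*}}(f)|\le e^{\wor}(H_{\alpha,\bsgamma};Q_{P_{N,\bsz^*}})$, which by Lemma~\ref{lem:worst-case} is at most $\bigl(\sum_{\bsk\ne\bszero}r_{\alpha,\bsgamma}(\bsk)^{-2}\bigr)^{1/2}=\bigl(\prod_{j=1}^{d}(1+2\gamma_j^{2}\zeta(2\alpha))-1\bigr)^{1/2}\le\prod_{j=1}^{d}(1+2\gamma_j^{2}\zeta(2\alpha))^{1/2}$, combined with $\PP[\mathcal{G}^{c}]\le(1-\eta)^{r}$ from Lemma~\ref{lem:prob_r}; this already yields the second summand.

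The heart of the matter is the estimate on $\mathcal{G}$. The key observation is that if $\bsz^*\in\Zcal_{N,\eta,\alpha,\bsgamma}$, then taking $\mu=\lambda$ in the definition of $\Zcal$ and using Lemma~\ref{lem:worst-case} gives $\sum_{\bsk\in P_{N,\bsz^*}^{\perp}\setminus\{\bszero\}}r_{\alpha,\bsgamma}(\bsk)^{-2}=\bigl(e^{\wor}(H_{\alpha,\bsgamma};Q_{P_{N,\bsz^*}})\bigr)^{2}\le B_{N,\eta,\alpha,\bsgamma,\lambda}^{2}$, so that \emph{every} nonzero dual vector satisfies $r_{\alpha,\bsgamma}(\bsk)\ge B_{N,\eta,\alpha,\bsgamma,\lambda}^{-1}\ge T$, where (using $N-1\ge\lceil M/2\rceil\ge M/2$) I set the deterministic threshold $T:=\bigl((1-\eta)M/(2A_\lambda)\bigr)^{\lambda}$ with $A_\lambda:=\prod_{j=1}^{d}(1+2\gamma_j^{1/\lambda}\zeta(\alpha/\lambda))$. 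Therefore, pointwise, $\bigl(\sum_{\bsk\in P_{N,\bsz^*}^{\perp}\setminus\{\bszero\}}|\hat f(\bsk)|\bigr)\chi(\mathcal{G})\le\sum_{\bsk\ne\bszero,\,r_{\alpha,\bsgamma}(\bsk)\ge T}|\hat f(\bsk)|\,\chi(\bsk\in P_{N,\bsz^*}^{\perp})$. Taking expectations, invoking Lemma~\ref{lem:prob_in_dual}, and then applying Cauchy--Schwarz over $\bsk$ gives
\[
\EE\bigl[|I(f)-Q_{P_{N,\bsz^*}}(f)|\,\chi(\mathcal{G})\bigr]\le c\,\frac{r}{M}\sum_{\substack{\bsk\ne\bszero\\ r_{\alpha,\bsgamma}(\bsk)\ge T}}|\hat f(\bsk)|\log(1+\|\bsk\|_\infty)\le c\,\frac{r}{M}\,\|f\|_{\alpha,\bsgamma}\Bigl(\sum_{\substack{\bsk\ne\bszero\\ r_{\alpha,\bsgamma}(\bsk)\ge T}}\frac{(\log(1+\|\bsk\|_\infty))^{2}}{r_{\alpha,\bsgamma}(\bsk)^{2}}\Bigr)^{1/2}.
\]

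It then remains to trade the constraint $r_{\alpha,\bsgamma}(\bsk)\ge T$ for a negative power of $T$: for any $\epsilon>0$ one has $r_{\alpha,\bsgamma}(\bsk)^{-2}\le T^{-\epsilon}\,r_{\alpha,\bsgamma}(\bsk)^{-(2-\epsilon)}$ on the range of summation. I would choose $\epsilon:=2-(2\delta+1)/\lambda$, which lies in $(0,2-1/\alpha)$ precisely because $0<\delta<\lambda-1/2$ and $\lambda<\alpha$; then $T^{-\epsilon/2}=\bigl(2A_\lambda/((1-\eta)M)\bigr)^{\lambda-\delta-1/2}$, and the residual series $\sum_{\bsk\ne\bszero}(\log(1+\|\bsk\|_\infty))^{2}r_{\alpha,\bsgamma}(\bsk)^{-(2-\epsilon)}$ converges because $(2-\epsilon)\alpha=\alpha(2\delta+1)/\lambda>1$. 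Bounding $(\log(1+\|\bsk\|_\infty))^{2}$ by a constant (depending on $\lambda,\delta,\alpha$) times $\prod_{k_j\ne0}|k_j|^{\sigma}$ for a sufficiently small $\sigma>0$, factorising the product and using $\gamma_j\le1$ together with the monotonicity of $t\mapsto\gamma_j^{t}$ and of $\zeta$, shows this series is at most $C'_{\lambda,\delta}\,A_\lambda$. Combining everything, using $\|f\|_{\alpha,\bsgamma}\le1$, and taking the supremum over $f$ produces exactly the first summand, with $C_{\lambda,\delta}$ absorbing $c$, $(C'_{\lambda,\delta})^{1/2}$ and $2^{\lambda-\delta-1/2}$; the hypothesis \eqref{eq:number_points} is what makes the bound so obtained nontrivial.

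The step I expect to be the main obstacle is precisely the observation that membership in $\Zcal_{N,\eta,\alpha,\bsgamma}$ forces every nonzero dual vector to satisfy $r_{\alpha,\bsgamma}(\bsk)\gtrsim M^{\lambda}$. This is what upgrades the plain $O(1/M)$ coming from the ``average thinness'' of the dual lattice (Lemma~\ref{lem:prob_in_dual}) to the near-optimal $O(M^{-\lambda-1/2})$: it lets one localise the error series to high frequencies, where a factor $r_{\alpha,\bsgamma}(\bsk)^{-\epsilon}\le T^{-\epsilon}$ can be extracted essentially for free. A secondary technical point is calibrating $\epsilon$ (and the auxiliary exponent $\sigma$ used to absorb the logarithmic factor) so that the leftover $\bsk$-series still converges and can be rewritten in terms of the product $\prod_{j=1}^{d}(1+2\gamma_j^{1/\lambda}\zeta(\alpha/\lambda))$ demanded by the statement; the condition $\delta<\min(\lambda-1/2,1)$ is exactly what makes this calibration possible.
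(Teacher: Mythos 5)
Your proposal is correct and follows essentially the same route as the paper's proof: the same split into the events $\bsz^*\in\Zcal_{N,\eta,\alpha,\bsgamma}$ and its complement, the same use of Lemma~\ref{lem:prob_r} for the failure term, and on the good event the same key observation that membership in $\Zcal_{N,\eta,\alpha,\bsgamma}$ forces $r_{\alpha,\bsgamma}(\bsk)$ to exceed an $M^{\lambda}$-type threshold on the dual lattice, followed by Lemma~\ref{lem:prob_in_dual} and Cauchy--Schwarz. The only differences are cosmetic (bounding the bad event via the worst-case error formula rather than Cauchy--Schwarz on the coefficient series, and using a fixed-$\lambda$ threshold instead of the infimum), and you additionally spell out the tail-sum estimate that the paper only cites from \cite{dick2022component,kritzer2019lattice}.
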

\begin{proof}
    For any function $f\in H_{\alpha,\bsgamma}$, by considering its Fourier series and then applying Lemma~\ref{lem:character} and the triangle inequality, we have
    \begin{align}
        & \EE\left[ |I(f)-Q_{P_{N,\bsz^*}}(f)|\right] \notag \\
        & = \EE\left[\left|\hat{f}(\bszero)-\frac{1}{N}\sum_{\bsx\in P_{N,\bsz^*}}\sum_{\bsk\in \ZZ^d}\hat{f}(\bsk)\, \re^{2\pi \ri \bsk\cdot \bsx}\right|\right] \notag \\
        & = \EE\left[\left|\hat{f}(\bszero)-\sum_{\bsk\in P_{N,\bsz^*}^{\perp}}\hat{f}(\bsk)\right|\right] \notag \\
        & \leq \EE\left[\sum_{\bsk\in P_{N,\bsz^*}^{\perp}\setminus \{\bszero\}}|\hat{f}(\bsk)|\right] \notag \\
        & = \EE\left[\left( \chi(\bsz^*\in \Zcal_{N,\eta,\alpha,\bsgamma})+\chi(\bsz^*\not\in \Zcal_{N,\eta,\alpha,\bsgamma})\right)\sum_{\bsk\in P_{N,\bsz^*}^{\perp}\setminus \{\bszero\}}|\hat{f}(\bsk)|\right] \notag \\
        & \leq \EE\left[\chi(\bsz^*\in \Zcal_{N,\eta,\alpha,\bsgamma})\sum_{\bsk\in P_{N,\bsz^*}^{\perp}\setminus \{\bszero\}}|\hat{f}(\bsk)|\right] \notag \\
        & \qquad \qquad + \EE\left[\chi(\bsz^*\not\in \Zcal_{N,\eta,\alpha,\bsgamma})\right]\sum_{\bsk\in \ZZ^d \setminus \{\bszero\}}|\hat{f}(\bsk)|. \label{eq:bound_on_random_error_proof_1}
    \end{align}
    
    Let us focus on the first term in \eqref{eq:bound_on_random_error_proof_1}. By the definition of $\Zcal_{N,\eta,\alpha,\bsgamma}$, and by Lemma~\ref{lem:worst-case} and Proposition~\ref{prop:prob_1}, it holds for any given $N\in \Pcal_M$ and $\bsz^*\in \Zcal_{N,\eta,\alpha,\bsgamma}$ that
    \[ \sum_{\bsk\in P_{N,\bsz^*}^{\perp}\setminus \{\bszero\}}\frac{1}{(r_{\alpha,\bsgamma}(\bsk))^2}\leq B_{N,\eta,\alpha,\bsgamma,\lambda}^2 \leq B_{\lceil M/2\rceil+1,\eta,\alpha,\bsgamma,\lambda}^2 ,\]
    for all $1/2\leq \lambda<\alpha$. By defining 
    \[ H_{M,\eta,\alpha,\bsgamma}:=\inf_{1/2\leq \lambda<\alpha}\left(\frac{2}{(1-\eta)M}\prod_{j=1}^{d}\left(1+2\gamma_j^{1/\lambda}\zeta(\alpha/\lambda)\right)\right)^{\lambda}, \]
    this means that, under the condition $\bsz^*\in \Zcal_{N,\eta,\alpha,\bsgamma}$, we have $\bsk\not\in P_{N,\bsz^*}^{\perp}\setminus \{\bszero\}$ if $1/r_{\alpha,\bsgamma}(\bsk)>H_{M,\eta,\alpha,\bsgamma}$ holds. We note here that the condition \eqref{eq:number_points} ensures that $H_{M,\eta,\alpha,\bsgamma}\leq 1.$ Together with this observation, by applying Lemma~\ref{lem:prob_in_dual} and Cauchy--Schwarz inequality, we obtain
    \begin{align*}
        & \EE\left[\chi(\bsz^*\in \Zcal_{N,\eta,\alpha,\bsgamma})\sum_{\bsk\in P_{N,\bsz^*}^{\perp}\setminus \{\bszero\}}|\hat{f}(\bsk)|\right] \\
        & \leq \EE\left[\sum_{\substack{\bsk\in P_{N,\bsz^*}^{\perp}\setminus \{\bszero\}\\ r_{\alpha,\bsgamma}(\bsk)\geq 1/H_{M,\eta,\alpha,\bsgamma}}}|\hat{f}(\bsk)|\right] \\
        & = \sum_{\substack{\bsk\in \ZZ^d \setminus \{\bszero\}\\ r_{\alpha,\bsgamma}(\bsk)\geq 1/H_{M,\eta,\alpha,\bsgamma}}}\PP\left[\bsk\in P_{N,\bsz^*}^{\perp}\right]|\hat{f}(\bsk)| \\
        & \leq c\frac{r}{M}\sum_{\substack{\bsk\in \ZZ^d \setminus \{\bszero\}\\ r_{\alpha,\bsgamma}(\bsk)\geq 1/H_{M,\eta,\alpha,\bsgamma}}}|\hat{f}(\bsk)|\log(1+\|\bsk\|_{\infty}) \\
        & \leq c\frac{r}{M}\left(\sum_{\substack{\bsk\in \ZZ^d \setminus \{\bszero\}\\ r_{\alpha,\bsgamma}(\bsk)\geq 1/H_{M,\eta,\alpha,\bsgamma}}}(|\hat{f}(\bsk)|r_{\alpha,\bsgamma}(\bsk))^2\right)^{1/2}\\
        & \qquad \qquad \times \left(\sum_{\substack{\bsk\in \ZZ^d \setminus \{\bszero\}\\ r_{\alpha,\bsgamma}(\bsk)\geq 1/H_{M,\eta,\alpha,\bsgamma}}}\frac{(\log(1+\|\bsk\|_{\infty}))^2}{(r_{\alpha,\bsgamma}(\bsk))^2}\right)^{1/2}\\
        & \leq c\frac{r}{M}\|f\|_{\alpha,\bsgamma}\left(\sum_{\substack{\bsk\in \ZZ^d \setminus \{\bszero\}\\ r_{\alpha,\bsgamma}(\bsk)\geq 1/H_{M,\eta,\alpha,\bsgamma}}}\frac{(\log(1+\|\bsk\|_{\infty}))^2}{(r_{\alpha,\bsgamma}(\bsk))^2}\right)^{1/2}.
        \end{align*}
        Although we omit the remaining argument for bounding the last sum over $\bsk$ from above since it follows exactly the same way as those in \cite{dick2022component,kritzer2019lattice}, see also \cite[Chapter~11]{dick2022lattice}, we can show that, for any $1/2<\lambda<\alpha$ and $0<\delta<\min(\lambda-1/2,1)$, there exists a constant $c'_{\lambda,\delta}>0$, depending only on $\lambda$ and $\delta$, such that
        \begin{align*}
            & \sum_{\substack{\bsk\in \ZZ^d \setminus \{\bszero\}\\ r_{\alpha,\bsgamma}(\bsk)\geq 1/H_{M,\eta,\alpha,\bsgamma}}}\frac{(\log(1+\|\bsk\|_{\infty}))^2}{(r_{\alpha,\bsgamma}(\bsk))^2}\\
            & \qquad \qquad \leq c'_{\lambda,\delta}\left(H_{M,\eta,\alpha,\bsgamma}\right)^{2-(2\delta+1)/\lambda}\prod_{j=1}^{d}\left(1+2\gamma_j^{1/\lambda}\zeta(\alpha/\lambda)\right)
        \end{align*}
        holds. We note that the assumption $\gamma_j\leq 1$ for all $j$ is implicitly used here. This gives an upper bound on the first term in \eqref{eq:bound_on_random_error_proof_1} as
        \begin{align*}
            & \EE\left[\chi(\bsz^*\in \Zcal_{N,\eta,\alpha,\bsgamma})\sum_{\bsk\in P_{N,\bsz^*}^{\perp}\setminus \{\bszero\}}|\hat{f}(\bsk)|\right] \\
            & \leq c\frac{r}{M}\|f\|_{\alpha,\bsgamma}\left( c'_{\lambda,\delta}\left(H_{M,\eta,\alpha,\bsgamma}\right)^{2-(2\delta+1)/\lambda}\prod_{j=1}^{d}\left(1+2\gamma_j^{1/\lambda}\zeta(\alpha/\lambda)\right)\right)^{1/2}\\
            & \leq c \frac{r{c'_{\lambda,\delta}}^{1/2}}{M}\|f\|_{\alpha,\bsgamma}\left(\frac{2}{(1-\eta)M}\prod_{j=1}^{d}\left(1+2\gamma_j^{1/\lambda}\zeta(\alpha/\lambda)\right)\right)^{\lambda-(2\delta+1)/2}\\
            & \qquad \qquad \times \left(\prod_{j=1}^{d}\left(1+2\gamma_j^{1/\lambda}\zeta(\alpha/\lambda)\right)\right)^{1/2}\\
            & = \|f\|_{\alpha,\bsgamma}\frac{r C_{\lambda,\delta}}{(1-\eta)^{\lambda-\delta-1/2}M^{\lambda-\delta+1/2}}\prod_{j=1}^{d}\left(1+2\gamma_j^{1/\lambda}\zeta(\alpha/\lambda)\right)^{\lambda-\delta},
        \end{align*}
        for any $1/2<\lambda<\alpha$ and $0<\delta<\min(\lambda-1/2,1)$, where the constant $C_{\lambda,\delta}>0$ depends only on $\lambda$ and $\delta$.
        
        Regarding the second term in \eqref{eq:bound_on_random_error_proof_1}, Lemma~\ref{lem:prob_r} tells us
        \[ \PP\left[\bsz^*\not\in \Zcal_{N,\eta,\alpha,\bsgamma}\right]=1-\PP\left[\bsz^*\in \Zcal_{N,\eta,\alpha,\bsgamma}\right]\leq (1-\eta)^r,\]
        so that, by applying Cauchy--Schwarz inequality, we obtain
        \begin{align*}
        & \EE\left[\chi(\bsz^*\not\in \Zcal_{N,\eta,\alpha,\bsgamma})\right]\sum_{\bsk\in \ZZ^d \setminus \{\bszero\}}|\hat{f}(\bsk)| \\
        & = \PP\left[\bsz^*\not\in \Zcal_{N,\eta,\alpha,\bsgamma}\right]\sum_{\bsk\in \ZZ^d \setminus \{\bszero\}}|\hat{f}(\bsk)| \\
        & \leq (1-\eta)^r \left(\sum_{\bsk\in \ZZ^d \setminus \{\bszero\}}(|\hat{f}(\bsk)|r_{\alpha,\bsgamma}(\bsk))^2\right)^{1/2}\left(\sum_{\bsk\in \ZZ^d \setminus \{\bszero\}}\frac{1}{(r_{\alpha,\bsgamma}(\bsk))^2}\right)^{1/2} \\
        & \leq (1-\eta)^r \|f\|_{\alpha,\bsgamma}\prod_{j=1}^{d}\left(1+2\gamma_j^{2}\zeta(2\alpha)\right)^{1/2}.
        \end{align*}
        
        Thus, by taking the supremum of the randomized error over the unit ball of $H_{\alpha,\bsgamma}$, the worst-case randomized error is bounded above by
        \begin{align*}
        e^{\ran}(H_{\alpha,\bsgamma}; A^{\ran}_{r,M}) & \leq \frac{rC_{\lambda,\delta} }{(1-\eta)^{\lambda-\delta-1/2}M^{\lambda-\delta+1/2}}\prod_{j=1}^{d}\left(1+2\gamma_j^{1/\lambda}\zeta(\alpha/\lambda)\right)^{\lambda-\delta}\\
        & \qquad + (1-\eta)^r \prod_{j=1}^{d}\left(1+2\gamma_j^{2}\zeta(2\alpha)\right)^{1/2},
        \end{align*}
        for any $1/2<\lambda<\alpha$ and $0<\delta<\min(\lambda-1/2,1)$.
        This completes the proof.
\end{proof}

By choosing $r$ such that the two terms appearing in the upper bound on the worst-case randomized error are balanced, our new randomized lattice rule can achieve nearly the optimal rate of convergence of $O(M^{-\alpha-1/2})$. The proof is straightforward, so we omit it in this paper.
\begin{corollary}\label{cor:randomized_error}
    Let $M\in \NN$, with $M\geq 2$, be given and assume that $M$ and $\eta\in (0,1)$ satisfy \eqref{eq:number_points}. Let 
    \[ r=\left\lceil -\left(\alpha+\frac{1}{2}\right)\frac{\log M}{\log (1-\eta)}\right\rceil.\]
    Then $e^{\ran}(H_{\alpha,\bsgamma}; A^{\ran}_{r,M})$ decays with the order $M^{-\lambda+\delta-1/2}\log M$ for any $1/2<\lambda<\alpha$ and $0<\delta<\min(\lambda-1/2,1)$, which is arbitrarily close to $M^{-\alpha-1/2}$ when $\lambda\to \alpha^{-}$ and $\delta\to 0^{+}$. Moreover, $e^{\ran}(H_{\alpha,\bsgamma}; A^{\ran}_{r,M})$ is bounded independently of the dimension $d$ if $\sum_{j=1}^{\infty}\gamma_j^{1/\alpha}<\infty$.
\end{corollary}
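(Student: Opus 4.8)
The plan is to substitute the prescribed value of $r$ into the two-term bound from Theorem~\ref{thm:randomized_error} and check that each term is $O(M^{-\lambda+\delta-1/2}\log M)$ with a dimension-independent implied constant under the summability condition. First I would observe that $r$ is chosen precisely so that $(1-\eta)^r = (1-\eta)^{\lceil -(\alpha+1/2)\log M/\log(1-\eta)\rceil} \leq (1-\eta)^{-(\alpha+1/2)\log M/\log(1-\eta)} = M^{-(\alpha+1/2)}$, using that $\log(1-\eta)<0$ so raising to a larger exponent decreases the value; hence the second term in the bound of Theorem~\ref{thm:randomized_error} is at most $M^{-\alpha-1/2}\prod_{j=1}^d(1+2\gamma_j^2\zeta(2\alpha))^{1/2}$, which is $O(M^{-\alpha-1/2})$ and thus also $O(M^{-\lambda+\delta-1/2}\log M)$ for any admissible $\lambda,\delta$ since $\alpha>\lambda>\lambda-\delta$.

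Next I would handle the first term. Here $r = \lceil -(\alpha+1/2)\log M/\log(1-\eta)\rceil \leq 1 - (\alpha+1/2)\log M/\log(1-\eta) = O(\log M)$ (with the constant depending on $\alpha$ and $\eta$ but not on $d$ or $M$), so the factor $r/M^{\lambda-\delta+1/2}$ contributes $O(M^{-\lambda+\delta-1/2}\log M)$. The remaining factors $(1-\eta)^{-(\lambda-\delta-1/2)}$ and $\prod_{j=1}^d(1+2\gamma_j^{1/\lambda}\zeta(\alpha/\lambda))^{\lambda-\delta}$ are independent of $M$, and the product is bounded independently of $d$ whenever $\sum_{j=1}^\infty\gamma_j^{1/\alpha}<\infty$: one uses $1+x\leq \re^x$ to get $\prod_{j=1}^d(1+2\gamma_j^{1/\lambda}\zeta(\alpha/\lambda)) \leq \exp(2\zeta(\alpha/\lambda)\sum_{j=1}^d\gamma_j^{1/\lambda})$, and since $\lambda<\alpha$ and $\gamma_j\leq 1$ we have $\gamma_j^{1/\lambda}\leq \gamma_j^{1/\alpha}$, so the exponent is bounded by $2\zeta(\alpha/\lambda)\sum_{j=1}^\infty\gamma_j^{1/\alpha}<\infty$. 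Combining the two terms gives the claimed decay order $M^{-\lambda+\delta-1/2}\log M$, with a constant independent of $d$ under the stated summability assumption, and letting $\lambda\to\alpha^-$ and $\delta\to 0^+$ shows this is arbitrarily close to $M^{-\alpha-1/2}$.

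I do not anticipate a genuine obstacle here, which is consistent with the authors' remark that the proof is straightforward and omitted. The only mild subtlety is bookkeeping: one must fix $\lambda$ and $\delta$ first (the value of $r$ does not depend on them, only on $\alpha$ and $\eta$), then verify that for \emph{this fixed} pair the second term $M^{-\alpha-1/2}$ is indeed dominated by $M^{-\lambda+\delta-1/2}\log M$ — which holds since $\alpha+1/2 > \lambda-\delta+1/2$ strictly, so no $\log M$ slack is even needed there. A second point worth stating explicitly is that the hypothesis \eqref{eq:number_points} of Theorem~\ref{thm:randomized_error} is inherited as a hypothesis of the corollary, so the theorem applies verbatim and nothing further needs to be checked about $M$ and $\eta$.
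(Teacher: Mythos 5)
Your proof is correct and is exactly the straightforward substitution argument the paper has in mind when it omits this proof: plug the prescribed $r$ into Theorem~\ref{thm:randomized_error}, note that $(1-\eta)^r\le M^{-\alpha-1/2}$ and $r=O(\log M)$, and bound the weight product via $1+x\le \re^{x}$ together with $\gamma_j^{1/\lambda}\le\gamma_j^{1/\alpha}$ for $\gamma_j\le 1$. The only point you leave implicit is that the dimension-independence claim also requires the second term's factor $\prod_{j=1}^{d}\bigl(1+2\gamma_j^{2}\zeta(2\alpha)\bigr)^{1/2}$ to be bounded in $d$, which follows by the same device since $\gamma_j\le 1$ and $\alpha>1/2$ give $\gamma_j^{2}\le\gamma_j^{1/\alpha}$ and $\zeta(2\alpha)<\infty$.
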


The result shown in Theorem~\ref{thm:randomized_error} is now extended to $e^{\rms}(H_{\alpha,\bsgamma}; \tilde{A}^{\ran}_{r,M})$ as follows:
\begin{theorem}\label{thm:rmse}
    Let $\alpha>1/2$ be a real number and $\bsgamma\in \RR_{\geq 0}^{\NN}$ be a sequence of non-negative weights, with each $\gamma_j$ bounded above by $1$. Furthermore, let $r,M\in \NN$, with $M\geq 2$, be given. Assume that $M$ and $\eta\in (0,1)$ satisfy \eqref{eq:number_points}. 
    Then it holds that
    \begin{align*}
    & \left(e^{\rms}(H_{\alpha,\bsgamma}; \tilde{A}^{\ran}_{r,M})\right)^2 \\
    & \leq \frac{rC_{\lambda,\delta}}{\alpha (1-\eta)^{\lambda(2-\delta)}M^{\lambda(2-\delta)+1}}\prod_{j=1}^{d}\left(1+2\gamma_j^{1/\lambda}\zeta(\alpha/\lambda)\right)^{\lambda(2-\delta)}+(1-\eta)^r,
    \end{align*}
        for any $1/2\leq \lambda<\alpha$ and $0<\delta\leq 1/\alpha$, where $C_{\lambda,\delta}$ is a positive constant depending only on $\lambda$ and $\delta$.
\end{theorem}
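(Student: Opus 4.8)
The plan is to recycle, almost verbatim, the architecture of the proof of Theorem~\ref{thm:randomized_error}, the one genuinely new ingredient being the classical fact that a random shift turns the mean-square error of a lattice rule into a bare sum of squared Fourier coefficients over the dual lattice; this makes the Cauchy--Schwarz splitting of $|\hat{f}(\bsk)|$ against $r_{\alpha,\bsgamma}(\bsk)$ unnecessary. Concretely, for a fixed prime $N$ and $\bsz\in\{1,\ldots,N-1\}^d$, expanding $f$ in its (absolutely convergent) Fourier series, applying Lemma~\ref{lem:character}, and integrating $|I(f)-Q_{P_{N,\bsz}+\bsdelta}(f)|^2$ over $\bsdelta\in[0,1]^d$ via $\int_{[0,1]^d}\re^{2\pi\ri(\bsk-\bsell)\cdot\bsdelta}\rd\bsdelta=\chi(\bsk=\bsell)$ yields
\[ \EE_{\bsdelta}\!\left[\left|I(f)-Q_{P_{N,\bsz}+\bsdelta}(f)\right|^2\right]=\sum_{\bsk\in P_{N,\bsz}^{\perp}\setminus\{\bszero\}}|\hat{f}(\bsk)|^2 . \]
Since $\bsdelta$ is drawn independently of $N,\bsz_1,\ldots,\bsz_r$, averaging over the latter and then splitting with $1=\chi(\bsz^*\in\Zcal_{N,\eta,\alpha,\bsgamma})+\chi(\bsz^*\notin\Zcal_{N,\eta,\alpha,\bsgamma})$ reduces the proof to estimating the contributions of these two events to $\EE_{N,\bsz_1,\ldots,\bsz_r}\big[\sum_{\bsk\in P_{N,\bsz^*}^{\perp}\setminus\{\bszero\}}|\hat{f}(\bsk)|^2\big]$, exactly as in the decomposition \eqref{eq:bound_on_random_error_proof_1}.

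For the failure event, using $\gamma_j\le1$, hence $r_{\alpha,\bsgamma}(\bsk)\ge1$, the inner sum is at most $\sum_{\bsk\ne\bszero}|\hat{f}(\bsk)|^2(r_{\alpha,\bsgamma}(\bsk))^2\le\|f\|_{\alpha,\bsgamma}^2$, while Lemma~\ref{lem:prob_r} gives $\EE[\chi(\bsz^*\notin\Zcal_{N,\eta,\alpha,\bsgamma})]=\PP[\bsz^*\notin\Zcal_{N,\eta,\alpha,\bsgamma}]\le(1-\eta)^r$; taking the supremum over $\|f\|_{\alpha,\bsgamma}\le1$ this is precisely the term $(1-\eta)^r$ in the statement. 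In contrast with Theorem~\ref{thm:randomized_error}, no product over $j$ survives here, exactly because we bound $\sum|\hat{f}(\bsk)|^2$ directly rather than $\big(\sum|\hat{f}(\bsk)|\big)$ via Cauchy--Schwarz.

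For the good event, on $\{\bsz^*\in\Zcal_{N,\eta,\alpha,\bsgamma}\}$ the worst-case error identity of Lemma~\ref{lem:worst-case} together with $N-1\ge\lceil M/2\rceil$ forces $r_{\alpha,\bsgamma}(\bsk)\ge1/H_{M,\eta,\alpha,\bsgamma}$ for every $\bsk\in P_{N,\bsz^*}^{\perp}\setminus\{\bszero\}$, with $H_{M,\eta,\alpha,\bsgamma}\le1$ by \eqref{eq:number_points}, just as in the proof of Theorem~\ref{thm:randomized_error} (here $H_{M,\eta,\alpha,\bsgamma}$ denotes the quantity introduced there). Interchanging expectation and summation and invoking Lemma~\ref{lem:prob_in_dual},
\[ \EE\!\left[\chi(\bsz^*\in\Zcal_{N,\eta,\alpha,\bsgamma})\sum_{\bsk\in P_{N,\bsz^*}^{\perp}\setminus\{\bszero\}}|\hat{f}(\bsk)|^2\right]\le c\,\frac rM\sum_{\substack{\bsk\in\ZZ^d\setminus\{\bszero\}\\ r_{\alpha,\bsgamma}(\bsk)\ge1/H_{M,\eta,\alpha,\bsgamma}}}|\hat{f}(\bsk)|^2\log(1+\|\bsk\|_{\infty}) . \]
Now, instead of Cauchy--Schwarz, I would write $|\hat{f}(\bsk)|^2=(|\hat{f}(\bsk)|r_{\alpha,\bsgamma}(\bsk))^2(r_{\alpha,\bsgamma}(\bsk))^{-2}$ and split $(r_{\alpha,\bsgamma}(\bsk))^{-2}=(r_{\alpha,\bsgamma}(\bsk))^{-(2-\delta)}(r_{\alpha,\bsgamma}(\bsk))^{-\delta}$: the summation constraint bounds the first factor by $H_{M,\eta,\alpha,\bsgamma}^{2-\delta}$, and $r_{\alpha,\bsgamma}(\bsk)\ge\|\bsk\|_{\infty}^{\alpha}$ (again using $\gamma_j\le1$) bounds $\log(1+\|\bsk\|_{\infty})(r_{\alpha,\bsgamma}(\bsk))^{-\delta}\le\log(1+\|\bsk\|_{\infty})\|\bsk\|_{\infty}^{-\alpha\delta}$, so the sum is at most $\|f\|_{\alpha,\bsgamma}^2\,H_{M,\eta,\alpha,\bsgamma}^{2-\delta}\sup_{m\in\NN}\log(1+m)\,m^{-\alpha\delta}$. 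For $0<\delta\le1/\alpha$ the elementary inequality $\log(1+m)\le(2m)^{\alpha\delta}/(\alpha\delta)$ makes the last supremum at most $2/(\alpha\delta)$, which is the source of the explicit factor $1/\alpha$. It then remains to use $H_{M,\eta,\alpha,\bsgamma}^{2-\delta}=\inf_{1/2\le\lambda<\alpha}\left(\frac{2}{(1-\eta)M}\prod_{j=1}^d\left(1+2\gamma_j^{1/\lambda}\zeta(\alpha/\lambda)\right)\right)^{\lambda(2-\delta)}$, fix any admissible $\lambda\in[1/2,\alpha)$, take the supremum over $\|f\|_{\alpha,\bsgamma}\le1$, and absorb the $(\lambda,\delta)$-dependent constants into $C_{\lambda,\delta}$.

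The only step that is not pure bookkeeping is this last one: one must check that the direct factoring reproduces the exact product $\prod_{j=1}^d(1+2\gamma_j^{1/\lambda}\zeta(\alpha/\lambda))^{\lambda(2-\delta)}$, hence the dimension independence under $\sum_j\gamma_j^{1/\alpha}<\infty$ — which it does, since that product is entirely carried by $H_{M,\eta,\alpha,\bsgamma}$ while the leftover scalar supremum is dimension-free. The assumption $\gamma_j\le1$ is used twice here (for $r_{\alpha,\bsgamma}(\bsk)\ge1$ and for $r_{\alpha,\bsgamma}(\bsk)\ge\|\bsk\|_{\infty}^{\alpha}$), and the restriction $\delta\le1/\alpha$ is needed only to keep the constant in $\sup_{m}\log(1+m)\,m^{-\alpha\delta}$ proportional to $1/\alpha$; everything else is the verbatim splitting from the proof of Theorem~\ref{thm:randomized_error} combined with Lemmas~\ref{lem:prob_r} and~\ref{lem:prob_in_dual}.
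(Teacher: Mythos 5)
Your proposal is correct and follows essentially the same route as the paper's proof: the random shift reduces the mean-square error to $\sum_{\bsk\in P_{N,\bsz^*}^{\perp}\setminus\{\bszero\}}|\hat{f}(\bsk)|^2$, the split on $\chi(\bsz^*\in\Zcal_{N,\eta,\alpha,\bsgamma})$ with Lemmas~\ref{lem:prob_r} and~\ref{lem:prob_in_dual} handles the two events, and the good-event sum is bounded by $\|f\|_{\alpha,\bsgamma}^2$ times a supremum controlled via $\log(1+x)\leq x^{\alpha\delta}/(\alpha\delta)$ and $r_{\alpha,\bsgamma}(\bsk)\geq 1/H_{M,\eta,\alpha,\bsgamma}$, exactly as in the paper. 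Your factoring $(r_{\alpha,\bsgamma}(\bsk))^{-2}=(r_{\alpha,\bsgamma}(\bsk))^{-(2-\delta)}(r_{\alpha,\bsgamma}(\bsk))^{-\delta}$ and the harmless extra factor $2^{\alpha\delta}$ are only cosmetic variations of the paper's sum-times-supremum step, absorbed into $C_{\lambda,\delta}$.
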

\begin{proof}
As considered in the proof of Theorem~\ref{thm:randomized_error}, the Fourier series of $f\in H_{\alpha,\bsgamma}$, Lemma~\ref{lem:character} and the triangle inequality lead to
\begin{align}
    & \EE\left[ |I(f)-Q_{P_{N,\bsz^*}+\bsdelta}(f)|^2\right] \notag \\
    & = \EE\left[\left|\hat{f}(\bszero)-\frac{1}{N}\sum_{\bsx\in P_{N,\bsz^*}}\sum_{\bsk\in \ZZ^d}\hat{f}(\bsk)\, \re^{2\pi \ri \bsk\cdot (\bsx+\bsdelta)}\right|\right] \notag \\
    & = \EE\left[ \left|\sum_{\bsk\in P_{N,\bsz^*}^{\perp}\setminus \{\bszero\}}\hat{f}(\bsk)\, \re^{2\pi \ri \bsk\cdot \bsdelta}\right|^2\right] \notag \\
    & = \EE\left[ \sum_{\bsk,\bsell\in P_{N,\bsz^*}^{\perp}\setminus \{\bszero\}}\hat{f}(\bsk)\overline{\hat{f}(\bsell)}\, \re^{2\pi \ri (\bsk-\bsell)\cdot \bsdelta}\right] \notag \\
    & = \EE_{N,\bsz_1,\ldots,\bsz_r}\left[ \sum_{\bsk,\bsell\in P_{N,\bsz^*}^{\perp}\setminus \{\bszero\}}\hat{f}(\bsk)\overline{\hat{f}(\bsell)}\, \EE_{\bsdelta}\left[\re^{2\pi \ri (\bsk-\bsell)\cdot \bsdelta}\right]\right] \notag \\
    & = \EE_{N,\bsz_1,\ldots,\bsz_r}\left[ \sum_{\bsk\in P_{N,\bsz^*}^{\perp}\setminus \{\bszero\}}|\hat{f}(\bsk)|^2\right] \notag \\
    & = \EE_{N,\bsz_1,\ldots,\bsz_r}\left[\left( \chi(\bsz^*\in \Zcal_{N,\eta,\alpha,\bsgamma})+\chi(\bsz^*\not\in \Zcal_{N,\eta,\alpha,\bsgamma})\right)\sum_{\bsk\in P_{N,\bsz^*}^{\perp}\setminus \{\bszero\}}|\hat{f}(\bsk)|^2\right] \notag \\
    & \leq \EE_{N,\bsz_1,\ldots,\bsz_r}\left[\chi(\bsz^*\in \Zcal_{N,\eta,\alpha,\bsgamma})\sum_{\bsk\in P_{N,\bsz^*}^{\perp}\setminus \{\bszero\}}|\hat{f}(\bsk)|^2\right] \notag \\
    & \qquad + \PP_{N,\bsz_1,\ldots,\bsz_r}\left[\bsz^*\not\in \Zcal_{N,\eta,\alpha,\bsgamma}\right]\sum_{\bsk\in \ZZ^d \setminus \{\bszero\}}|\hat{f}(\bsk)|^2.
    \label{eq:bound_on_rmse_proof_1}
\end{align}

Following a similar argument to that made in the proof of Theorem~\ref{thm:randomized_error}, the first term in \eqref{eq:bound_on_rmse_proof_1} is bounded above by
\begin{align*}
    & \EE_{N,\bsz_1,\ldots,\bsz_r}\left[\chi(\bsz^*\in \Zcal_{N,\eta,\alpha,\bsgamma})\sum_{\bsk\in P_{N,\bsz^*}^{\perp}\setminus \{\bszero\}}|\hat{f}(\bsk)|^2\right] \\
    & \leq \EE\left[\sum_{\substack{\bsk\in P_{N,\bsz^*}^{\perp}\setminus \{\bszero\}\\ r_{\alpha,\bsgamma}(\bsk)\geq 1/H_{M,\eta,\alpha,\bsgamma}}}|\hat{f}(\bsk)|^2\right] \\
    & = \sum_{\substack{\bsk\in \ZZ^d \setminus \{\bszero\}\\ r_{\alpha,\bsgamma}(\bsk)\geq 1/H_{M,\eta,\alpha,\bsgamma}}}\PP\left[\bsk\in P_{N,\bsz^*}^{\perp}\right]|\hat{f}(\bsk)|^2\\
    & \leq c\frac{r}{M}\sum_{\substack{\bsk\in \ZZ^d \setminus \{\bszero\}\\ r_{\alpha,\bsgamma}(\bsk)\geq 1/H_{M,\eta,\alpha,\bsgamma}}}|\hat{f}(\bsk)|^2\log(1+\|\bsk\|_{\infty}) \\
    & \leq c\frac{r}{M}\left(\sum_{\substack{\bsk\in \ZZ^d \setminus \{\bszero\}\\ r_{\alpha,\bsgamma}(\bsk)\geq 1/H_{M,\eta,\alpha,\bsgamma}}}(|\hat{f}(\bsk)|r_{\alpha,\bsgamma}(\bsk))^2\right)\times \sup_{\substack{\bsk\in \ZZ^d \setminus \{\bszero\}\\ r_{\alpha,\bsgamma}(\bsk)\geq 1/H_{M,\eta,\alpha,\bsgamma}}}\frac{\log(1+\|\bsk\|_{\infty})}{(r_{\alpha,\bsgamma}(\bsk))^2}\\
    & \leq c\frac{r}{M}\|f\|_{\alpha,\bsgamma}^2\sup_{\substack{\bsk\in \ZZ^d \setminus \{\bszero\}\\ r_{\alpha,\bsgamma}(\bsk)\geq 1/H_{M,\eta,\alpha,\bsgamma}}}\frac{\log(1+\|\bsk\|_{\infty})}{(r_{\alpha,\bsgamma}(\bsk))^2}.
\end{align*}
As done in \cite[Theorem~11]{kritzer2019lattice}, using the elementary inequality $\log(1+x)\leq x^{\alpha \delta}/(\alpha \delta)$ for all $\delta\in (0,1/\alpha]$ along with the assumption $\gamma_j\leq 1$ for all $j$, we can derive
\begin{align*}
    \sup_{\substack{\bsk\in \ZZ^d \setminus \{\bszero\}\\ r_{\alpha,\bsgamma}(\bsk)\geq 1/H_{M,\eta,\alpha,\bsgamma}}}\frac{\log(1+\|\bsk\|_{\infty})}{(r_{\alpha,\bsgamma}(\bsk))^2} & \leq \frac{1}{\alpha \delta}\sup_{\substack{\bsk\in \ZZ^d \setminus \{\bszero\}\\ r_{\alpha,\bsgamma}(\bsk)\geq 1/H_{M,\eta,\alpha,\bsgamma}}}\frac{\|\bsk\|_{\infty}^{\alpha \delta}}{(r_{\alpha,\bsgamma}(\bsk))^2}\\
    & \leq \frac{1}{\alpha \delta}\sup_{\substack{\bsk\in \ZZ^d \setminus \{\bszero\}\\ r_{\alpha,\bsgamma}(\bsk)\geq 1/H_{M,\eta,\alpha,\bsgamma}}}\frac{1}{(r_{\alpha,\bsgamma}(\bsk))^{2-\delta}}\\
    & \leq \frac{1}{\alpha \delta}\left( H_{M,\eta,\alpha,\bsgamma}\right)^{2-\delta}.
\end{align*}

The second term in \eqref{eq:bound_on_rmse_proof_1} can also be bounded above in a similar manner to the proof of Theorem~\ref{thm:randomized_error} as follows:
\begin{align*}
    & \PP_{N,\bsz_1,\ldots,\bsz_r}\left[\bsz^*\not\in \Zcal_{N,\eta,\alpha,\bsgamma}\right]\sum_{\bsk\in \ZZ^d \setminus \{\bszero\}}|\hat{f}(\bsk)|^2 \\
    & \leq  (1-\eta)^r \sum_{\bsk\in \ZZ^d \setminus \{\bszero\}}|\hat{f}(\bsk)|^2\\
    & \leq (1-\eta)^r \left(\sum_{\bsk\in \ZZ^d \setminus \{\bszero\}}(|\hat{f}(\bsk)|r_{\alpha,\bsgamma}(\bsk))^2\right)\sup_{\bsk\in \ZZ^d \setminus \{\bszero\}}\frac{1}{(r_{\alpha,\bsgamma}(\bsk))^2}\leq (1-\eta)^r\|f\|_{\alpha,\bsgamma}^2,
\end{align*}
where we have used the assumption $\gamma_j\leq 1$ to obtain the last inequality.

Altogether, by taking the supremum of the randomized error over the unit ball of $H_{\alpha,\bsgamma}$, we obtain
\begin{align*}
    & \left(e^{\rms}(H_{\alpha,\bsgamma}; \tilde{A}^{\ran}_{r,M})\right)^2 \\
    & \leq  c\frac{r}{\alpha \delta M}\left( H_{M,\eta,\alpha,\bsgamma}\right)^{2-\delta}+(1-\eta)^r\\
    & \leq \frac{rC_{\lambda,\delta}}{\alpha (1-\eta)^{\lambda(2-\delta)}M^{\lambda(2-\delta)+1}}\prod_{j=1}^{d}\left(1+2\gamma_j^{1/\lambda}\zeta(\alpha/\lambda)\right)^{\lambda(2-\delta)}+(1-\eta)^r,
\end{align*}
for any $1/2\leq \lambda<\alpha$ and $0<\delta\leq 1/\alpha$, where $C_{\lambda,\delta}>0$ is a constant depending only on $\lambda$ and $\delta$. This completes the proof.
\end{proof}

Similarly to the randomized error, the following corollary from Theorem~\ref{thm:rmse} demonstrates that our randomized lattice rule with shift can achieve nearly the optimal rate of the RMSE. Although we do not provide the proof again, it is worth noting a slight increase in $r$ in this case for a necessity to further reduce the failure probability.

\begin{corollary}\label{cor:rmse}
    Let $M\in \NN$, with $M\geq 2$, be given and assume that $M$ and $\eta\in (0,1)$ satisfy \eqref{eq:number_points}. Let 
    \begin{align}\label{eq:number_trials}
        r=\left\lceil -\left(2\alpha+1\right)\frac{\log M}{\log (1-\eta)}\right\rceil.
    \end{align}
    Then $e^{\rms}(H_{\alpha,\bsgamma}; \tilde{A}^{\ran}_{r,M})$ decays with the order $M^{-\lambda-1/2-\lambda\delta/2}\log M$ for any $1/2\leq \lambda<\alpha$ and $0<\delta\leq 1/\alpha$, which is arbitrarily close to $M^{-\alpha-1/2}$ when $\lambda\to \alpha^{-}$ and $\delta\to 0^{+}$. Moreover, $e^{\rms}(H_{\alpha,\bsgamma}; \tilde{A}^{\ran}_{r,M})$ is bounded independently of the dimension $d$ if $\sum_{j=1}^{\infty}\gamma_j^{1/\alpha}<\infty$.
\end{corollary}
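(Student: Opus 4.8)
The plan is to derive Corollary~\ref{cor:rmse} directly from the bound in Theorem~\ref{thm:rmse} by substituting the prescribed value of $r$ in \eqref{eq:number_trials} and checking that both terms on the right-hand side decay at the claimed rate. The guiding principle is exactly the one stated in the paragraph preceding the corollary: choose $r$ just large enough that the failure-probability term $(1-\eta)^r$ is no larger than the ``main'' term, up to logarithmic factors. First I would note that, with $r$ as in \eqref{eq:number_trials}, we have $(1-\eta)^r \leq (1-\eta)^{-(2\alpha+1)\log M/\log(1-\eta)} = M^{-(2\alpha+1)} = M^{-2\alpha-1}$, since $\log(1-\eta) < 0$ makes the exponent work out. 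This already decays faster than $M^{-2\lambda-1}$ for any $\lambda < \alpha$, so the $(1-\eta)^r$ term is asymptotically dominated and contributes at most $O(M^{-2\alpha-1})$, which is even better than the target $M^{-2\lambda-1-\lambda\delta}$.

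Next I would turn to the first term in the bound of Theorem~\ref{thm:rmse}, namely
\[
\frac{rC_{\lambda,\delta}}{\alpha (1-\eta)^{\lambda(2-\delta)}M^{\lambda(2-\delta)+1}}\prod_{j=1}^{d}\left(1+2\gamma_j^{1/\lambda}\zeta(\alpha/\lambda)\right)^{\lambda(2-\delta)}.
\]
Here $r = \Theta(\log M)$ by its definition, and the factors $(1-\eta)^{-\lambda(2-\delta)}$, $\alpha^{-1}$, $C_{\lambda,\delta}$, and the product over $j$ are all constants independent of $M$ (the product is bounded when $\sum_j \gamma_j^{1/\alpha} < \infty$, using the estimate $1+2\gamma_j^{1/\lambda}\zeta(\alpha/\lambda) \leq \exp(2\zeta(\alpha/\lambda)\gamma_j^{1/\lambda})$ and $\gamma_j^{1/\lambda} \leq \gamma_j^{1/\alpha}$ since $\gamma_j \leq 1$ and $\lambda < \alpha$, as already spelled out after Proposition~\ref{prop:prob_1}). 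Hence this term is of order $M^{-\lambda(2-\delta)-1}\log M = M^{-2\lambda-1+\lambda\delta}\log M$. Taking square roots of the whole bound of Theorem~\ref{thm:rmse} — and using that for nonnegative $a,b$ one has $\sqrt{a+b} \leq \sqrt a + \sqrt b$ — gives $e^{\rms}(H_{\alpha,\bsgamma}; \tilde{A}^{\ran}_{r,M}) = O(M^{-\lambda-1/2+\lambda\delta/2}\sqrt{\log M}) + O(M^{-\alpha-1/2})$, and absorbing $\sqrt{\log M}$ into $\log M$ and noting the first term dominates yields the stated rate $M^{-\lambda-1/2-\lambda\delta/2}\log M$.

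Wait — I should be careful with a sign: the corollary writes the exponent as $-\lambda - 1/2 - \lambda\delta/2$, whereas my computation gives $-\lambda - 1/2 + \lambda\delta/2$; I would double-check the square-root exponent against Theorem~\ref{thm:rmse}, where the squared error has exponent $-\lambda(2-\delta)-1 = -2\lambda + \lambda\delta - 1$, so the unsquared exponent is $-\lambda + \lambda\delta/2 - 1/2$, confirming the $+\lambda\delta/2$ sign; I would then reconcile with the statement by reparametrizing (the paper's $\delta$ in the corollary may correspond to a sign-flipped or renamed quantity relative to the theorem, e.g.\ replacing $\delta$ with $-\delta$ is not allowed since $\delta > 0$, so more likely the corollary intends the monomial bound to be read with the understanding that $\delta$ can be taken arbitrarily small, in which case both $\pm\lambda\delta/2$ give the same limiting rate $M^{-\alpha-1/2}$). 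For the dimension-independence claim, I would simply observe that none of the constants multiplying the $M$-powers depend on $d$ once $\sum_j \gamma_j^{1/\alpha} < \infty$, by the product estimate recalled above. The main obstacle is not any deep step but rather bookkeeping: verifying that the $r = \Theta(\log M)$ factor only costs a logarithm, that the $(1-\eta)^r$ term is genuinely subdominant, and — most delicately — getting the exponent of $\log M$ and the sign of the $\lambda\delta$ correction to match the statement exactly after taking square roots; since the paper explicitly declines to give the proof, I would present this as a short verification rather than a full argument.
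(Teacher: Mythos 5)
Your proposal is correct and is precisely the substitution argument the paper intends (the proof is omitted there): insert \eqref{eq:number_trials} into Theorem~\ref{thm:rmse}, bound the failure term by $(1-\eta)^r\leq M^{-2\alpha-1}$, note the main term is of order $r\,M^{-\lambda(2-\delta)-1}=O(M^{-\lambda(2-\delta)-1}\log M)$, take square roots, and get dimension-independence from the bound $\prod_{j=1}^{d}(1+2\gamma_j^{1/\lambda}\zeta(\alpha/\lambda))\leq \exp(2\zeta(\alpha/\lambda)\sum_j\gamma_j^{1/\alpha})$. Your sign concern should be resolved in your favor rather than explained away: Theorem~\ref{thm:rmse} gives the unsquared rate $M^{-\lambda-1/2+\lambda\delta/2}\sqrt{\log M}$, and the exponent $-\lambda-1/2-\lambda\delta/2$ printed in the corollary is evidently a typo, since it does not follow from the theorem and would, e.g.\ for $\alpha=1$, $\lambda$ near $1$, $\delta=1$, assert a rate faster than the optimal $M^{-\alpha-1/2}$; the limiting claim as $\lambda\to\alpha^{-}$, $\delta\to 0^{+}$ is unaffected. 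One small slip to fix: your parenthetical assertion that $M^{-2\alpha-1}$ beats the (misprinted) target $M^{-2\lambda-1-\lambda\delta}$ is false for some admissible $(\lambda,\delta)$ (it requires $\lambda(2+\delta)\leq 2\alpha$); what you actually need, and do have, is that $M^{-2\alpha-1}$ is dominated by the genuine main term $M^{-\lambda(2-\delta)-1}\log M$, which holds because $\lambda(2-\delta)<2\alpha$.
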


%------------------------------------------
\subsection{Additional remarks}\label{subsec:remarks}
We provide comments on our new randomized lattice rule, covering aspects such as algorithmic formulation, computational cost, and potential extensions.
\begin{remark}
    In Algorithm~\ref{alg:without}, we first randomly draw a single $N$, followed by $\bsz_1,\dots,\bsz_r$ conditioned on $N$. Even if this process were replaced by independently and randomly drawing $N_1,\ldots,N_r$ from the set $\Pcal_M$ first, and then each $\bsz_j$ conditioned on $N_j$, a randomized lattice rule based on the pair $(N^*,\bsz^*)$ that minimizes the worst-case error $e^{\wor}(H_{\alpha,\bsgamma}; P_{N_j,\bsz_j})$ among $(N_1,\bsz_1),\ldots,(N_r,\bsz_r)$ can still yield similar upper bounds on both the (worst-case) randomized error and the RMSE.
\end{remark}

\begin{remark}
    As mentioned in Lemma~\ref{lem:worst-case}, computing $e^{\wor}(H_{\alpha,\bsgamma}; P_{N,\bsz_j})$ requires a computational cost of $O(dM)$ for each $j\in \{1,\ldots,r\}$. Thus, the necessary cost to run Algorithm~\ref{alg:without} once is $O(rdM)$. To achieve nearly the optimal rate of the two error criteria with our new randomized rule, it suffices to set 
    \[ r=\left\lceil -\left(\alpha+\frac{1}{2}\right)\frac{\log M}{\log (1-\eta)}\right\rceil\quad \text{or}\quad r=\left\lceil -\left(2\alpha+1\right)\frac{\log M}{\log (1-\eta)}\right\rceil, \]
    as discussed in Corollaries~\ref{cor:randomized_error} and \ref{cor:rmse}, respectively. This results in a cost of $O(\alpha dM\log M)$, which has an additional factor $\alpha$ compared to the cost of the randomized CBC algorithm in \cite{dick2022component}. However, by exploiting parallel computation for computing $e^{\wor}(H_{\alpha,\bsgamma}; P_{N,\bsz_j})$ for all $j$, the overall cost of running Algorithm~\ref{alg:without} once remains $O(dM)$, if $O(\log M)$ computational nodes are available. It should be noted that the randomized CBC algorithm in \cite{dick2022component} requires a cost of $O(dM\log M)$, using the fast Fourier transform. The $\log M$ factor in the CBC algorithm is not easily parallelizable, which makes our approach slightly more efficient in a parallel computation setting.
\end{remark}

\begin{remark}\label{rem:stable}
The results obtained thus far have been built upon the situation where the parameters $\alpha$ and $\bsgamma$ are well specified. To demonstrate the stability of our randomized lattice rule, let us consider a different parameter set $\beta$ and $\bsgamma'=(\gamma'_j)_{j\in \NN}$ such that
\[ \beta>\alpha>\frac{1}{2}\quad \text{and} \quad \gamma'_j=\gamma_j^{\beta/\alpha}\quad \text{for all $j$.}\]
Since the $\ell_p$-norm for an infinite sequence is decreasing in $p\geq 1$, we can infer
\begin{align*}
    \left(e^{\wor}(H_{\beta,\bsgamma'}; Q_{P_{N,\bsz}})\right)^2 & = \sum_{\bsk\in P_{N,\bsz}^{\perp}\setminus \{\bszero\}}\frac{1}{(r_{\beta,\bsgamma'}(\bsk))^2} = \sum_{\bsk\in P_{N,\bsz}^{\perp}\setminus \{\bszero\}}\frac{1}{(r_{\alpha,\bsgamma}(\bsk))^{2\beta/\alpha}} \\
    & \leq \left(\sum_{\bsk\in P_{N,\bsz}^{\perp}\setminus \{\bszero\}}\frac{1}{(r_{\alpha,\bsgamma}(\bsk))^{2}}\right)^{\beta/\alpha} = \left(e^{\wor}(H_{\alpha,\bsgamma}; Q_{P_{N,\bsz}})\right)^{2\beta/\alpha}.
\end{align*}
This suggests that our randomized lattice rule can adapt to the weighted Korobov space with higher smoothness $\beta$. To ensure that the second term in the upper bounds given in Theorems~\ref{thm:randomized_error} and \ref{thm:rmse} is not dominant, it suffices to set
\begin{align}\label{eq:stable_r}
r=\left\lceil -g(M)\frac{\log M}{\log (1-\eta)}\right\rceil,
\end{align}
where $g: \NN\to \RR_{\geq 0}$ is a slowly increasing function such as $g(M)=\log M$ or $g(M)=\max(\log\log M,1)$. We refer to \cite{dick2021stability} for further results on the stability of rank-1 lattice rules in the deterministic setting.
\end{remark}

\begin{remark}
    Our results obtained so far can be naturally extended to the weighted half-period cosine spaces \cite{cools2016tent,dick2014lattice}, which includes the weighted unanchored Sobolev space with first-order dominating mixed smoothness as a special case. This extension can be attained by applying the tent transformation
    \[ \pi(x)=1-|2x-1|\]
    componentwise to every point in $P_{N,\bsz^*}$ or $P_{N,\bsz^*}+\bsdelta$, as described in \cite[Section~4]{dick2022component}. However, whether the same extension applies to the weighted unanchored Sobolev space with second-order dominating mixed smoothness remains an open question for future research. We refer to \cite{goda2019lattice,hickernell2002obtaining} as relevant literature in this context.
    
    Similarly, our findings can be extended to the weighted Walsh spaces by replacing rank-1 lattice point sets with (infinite-precision) rank-1 polynomial lattice point sets. Here, instead of randomly choosing $N$ (the number of points), we randomly draw an irreducible polynomial $p$ with a fixed degree, as investigated in \cite[Section~5]{dick2022component}.
\end{remark}

\begin{remark}
    In \cite{goda2022note}, the author studied the concatenation of rank-1 lattice points in dimension $d$ with random points in dimension $s-d\, (>0)$ to approximate $s$-variate integrals. This approach is also applicable to our randomized lattice rule. Although we omit the details, under the assumption $\gamma_j\leq 1$ for all $j$, the squared worst-case RMSE of such a concatenated randomized cubature rule with a random shift can be shown to be bounded above by the addition of the two terms shown in Theorem~\ref{thm:rmse} and an additional term
    \[ \frac{2}{M}\max_{j=d+1,\ldots,s}\gamma_j^2.\]
    If the weights are arranged in decreasing order, i.e., $\gamma_1\geq \gamma_2\geq \cdots\geq 0$, and satisfy the summability condition $\sum_{j=1}^{\infty}\gamma_j^{1/\alpha}<\infty$, we have
    \[ \infty>\sum_{j=1}^{\infty}\gamma_j^{1/\alpha}\geq \sum_{j=1}^{d}\gamma_j^{1/\alpha}\geq d\gamma_d^{1/\alpha}, \]
    which implies the existence of constants $C>0$ and $\beta\geq \alpha$ such that
    \[ \max_{j=d+1,\ldots,s}\gamma_j^2=\gamma_{d+1}^2\leq C(d+1)^{-\beta},\] 
    for any $d\in \NN$. 
    By choosing $d= c \cdot \lceil M^{\alpha/\beta}\rceil$ for some positive constant $c$, the additional term arising from the concatenation of random points does not dominate, ensuring that the rate of convergence remains nearly optimal, regardless of how large $s$ is.
\end{remark}

%------------------------------------------
%------------------------------------------
\section{Numerical experiments}\label{sec:numerics}
In this final section, we present two types of numerical experiments. In Section~\ref{subsec:numerics1}, we compare the distributions of the worst-case errors of rank-1 lattice rules obtained by our Algorithm~\ref{alg:without} with those obtained by the randomized CBC construction \cite{dick2022component}. Following this, in Section~\ref{subsec:numerics2}, we assess the empirical performance of these two randomized lattice rules by applying them to test functions.

%------------------------------------------
\subsection{Distribution of the worst-case error}\label{subsec:numerics1}
First, we empirically investigate the distribution of the worst-case error in the same weighted Korobov space using two different randomized lattice rules: our new algorithm (Algorithm~\ref{alg:without}) and the randomized CBC construction introduced by \cite{dick2022component}. Our new algorithm selects a generating vector by randomly drawing generating vectors multiple times from all possible candidates and choosing the one that minimizes the worst-case error among them. This ensures that the worst-case error achieves nearly the optimal order with a very high probability. In fact, as shown in Lemma~\ref{lem:prob_r}, the probability of the event $\bsz^*\in \Zcal_{N,\eta,\alpha,\bsgamma}$ is at least $1-(1-\eta)^r$. On the other hand, any generating vector obtained from the randomized CBC construction guarantees that the worst-case error of the corresponding lattice rule achieves nearly the optimal order with probability 1. Here, we aim to compare the worst-case error distributions of rank-1 lattice rules obtained from these two algorithms.

\begin{figure}[p]
\begin{subfigure}{.48\textwidth}
  \centering
  \includegraphics[width=\linewidth]{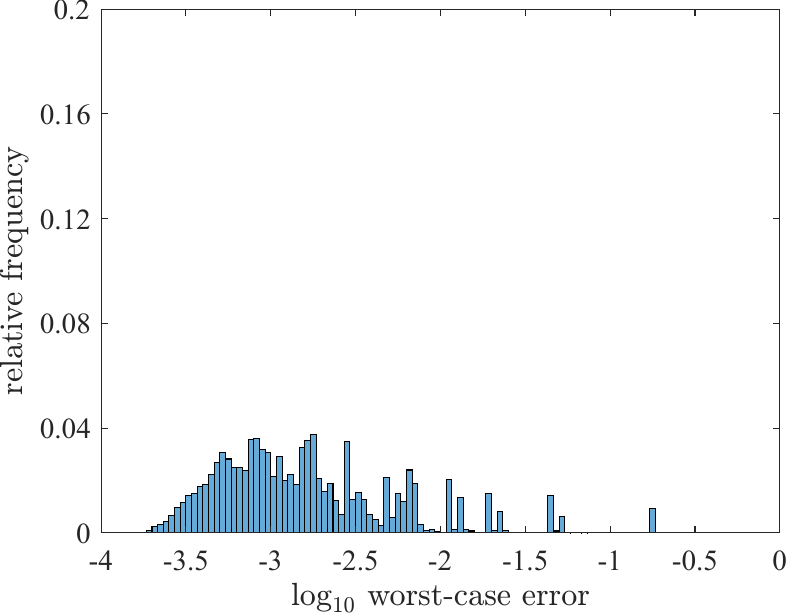}
\end{subfigure}
\hfill
\begin{subfigure}{.48\textwidth}
  \centering
  \includegraphics[width=\linewidth]{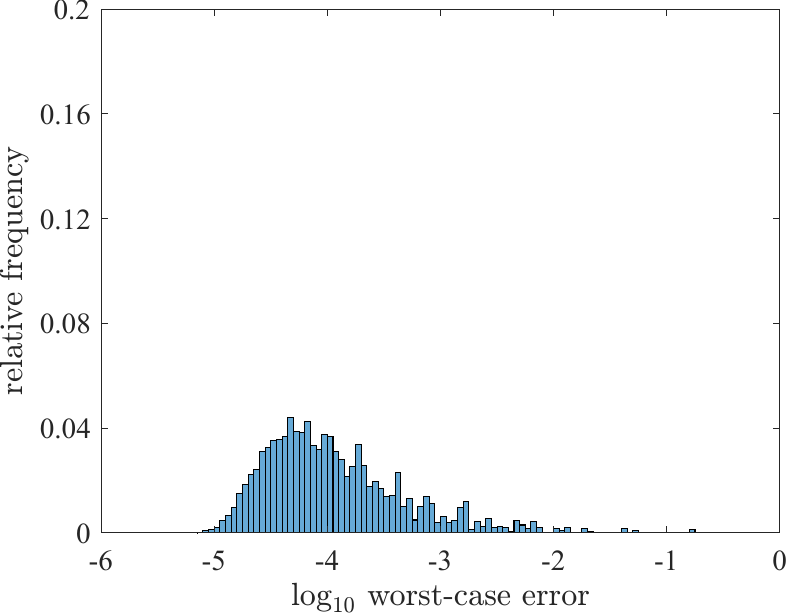}
\end{subfigure}\\[1em]
\begin{subfigure}{.48\textwidth}
  \centering
  \includegraphics[width=\linewidth]{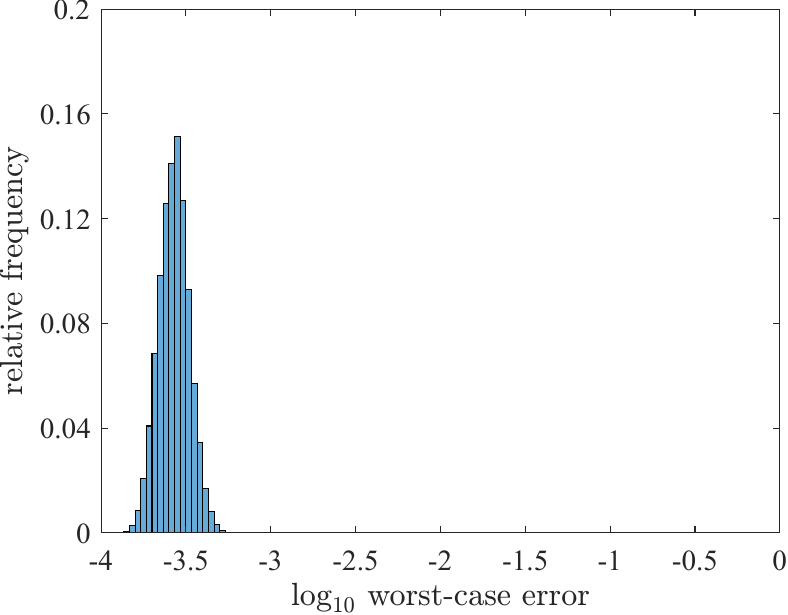}
\end{subfigure}
\hfill
\begin{subfigure}{.48\textwidth}
  \centering
  \includegraphics[width=\linewidth]{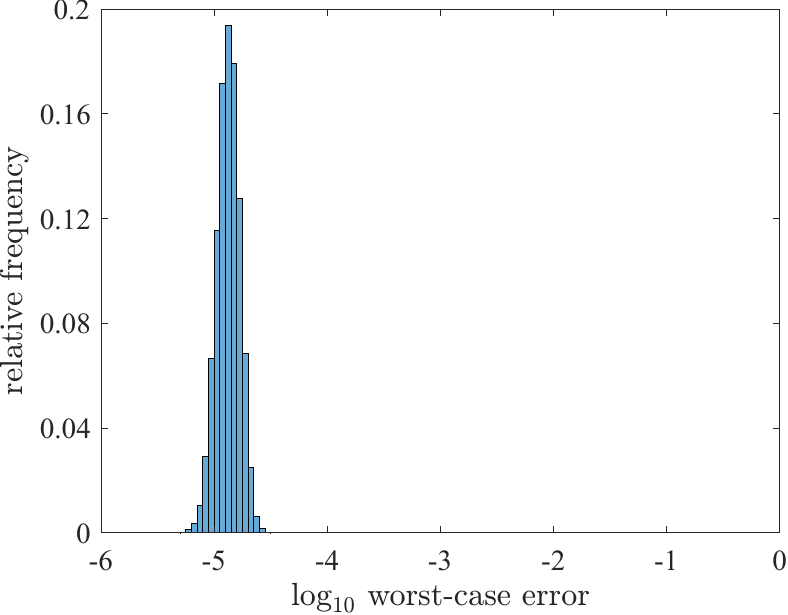}
\end{subfigure}\\[1em]
\begin{subfigure}{.48\textwidth}
  \centering
  \includegraphics[width=\linewidth]{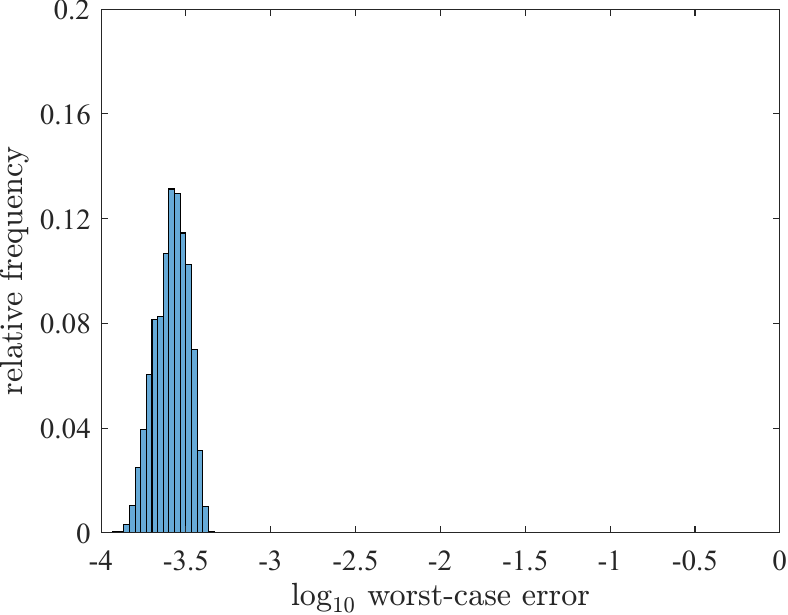}
\end{subfigure}
\hfill
\begin{subfigure}{.48\textwidth}
  \centering
  \includegraphics[width=\linewidth]{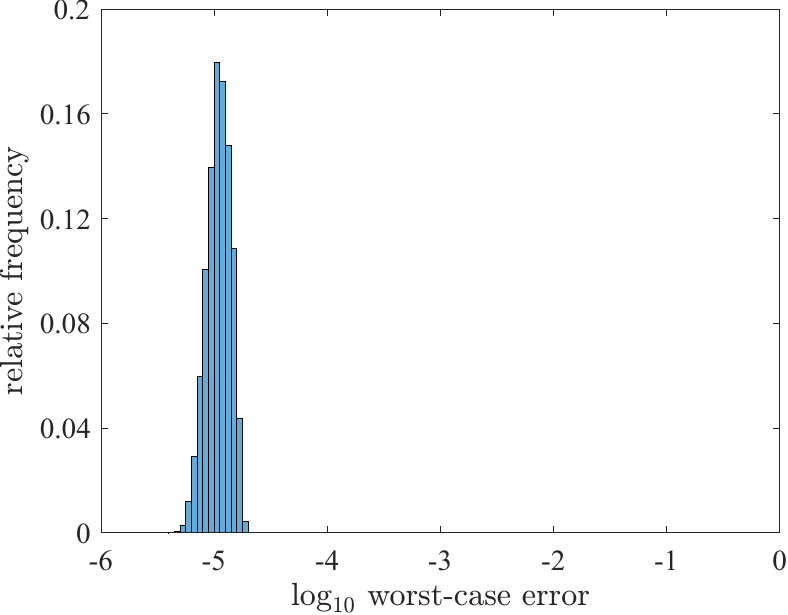}
\end{subfigure}
\caption{Relative frequency histograms of the base-10 logarithm of the worst-case error in the weighted Korobov space with $d=20$, $\alpha=2$, and $\gamma_j=1/j^3$. The left column displays the results for $N=251$, while the right column shows those for $N=2039$. The top row displays the results for rank-1 lattice points with randomly chosen $\bsz\in \{1,\ldots,N-1\}^d$, the middle row for those obtained by Algorithm~\ref{alg:without} with fixed $N$, $\eta=1/2$, and $r$ given by \eqref{eq:number_trials} in which $M$ is replaced by $N$, and the bottom row for those generated by the randomized CBC construction from \cite{dick2022component} with fixed $N$ and $\eta=1/2$ (note that the symbol $\tau$ was used instead of $\eta$ in \cite{dick2022component}).}
\label{fig:distribution}
\end{figure}

Consider the weighted Korobov space $H_{\alpha,\bsgamma}$ with $\alpha=2$ and $\gamma_j=1/j^3$ in dimension $d=20$. As a reference, the top two panels of Figure~\ref{fig:distribution} display the empirical distributions of the worst-case error  (logarithmically scaled with base-$10$) in this space for rank-1 lattice rules when the generating vectors are randomly chosen from $\{1,\ldots,N-1\}^d$ for two different primes $N=251$ (left) and $N=2039$ (right), respectively. Each distribution represents a relative frequency histogram based on $10^4$ independent and random draws of generating vectors. As evident, they exhibit right-skewed patterns, indicating that while the majority of possible generating vectors lead to small worst-case errors, there is still a non-negligible proportion of generating vectors with significantly larger worst-case errors.  A similar empirical result regarding the wide spread of worst-case error for randomly chosen generating vectors can be found in \cite[Section~4]{goda2022construction}.

In a similar experimental setup, we present the results for our new algorithm and the randomized CBC construction in the middle and bottom panels of Figure~\ref{fig:distribution}, respectively. Since it is necessary for our new algorithm to set the number of repetitions, denoted by $r$, we used the value obtained by replacing $M$ with $N$ in \eqref{eq:number_trials} with $\eta=1/2$ in this experiment. Similarly, for the randomized CBC construction described in \cite[Algorithm~2.4]{dick2022component}, we set $\tau=1/2$, where the symbol $\tau$ was used instead of $\eta$ with the same meaning. For both values of $N$, the empirical distributions obtained by these two algorithms exhibit single high peaks in the region of small worst-case errors with more symmetric patterns compared to the results in the top panels. No clear distinction is observed visually between these two algorithms. This result indicates that our new algorithm was successful in filtering out the bad generating vectors through the selection process. 

%------------------------------------------
\subsection{Performance for test functions}\label{subsec:numerics2}
We apply the two randomized lattice rules, both with a random shift, by drawing the number of points and the generating vectors randomly according to either our Algorithm~\ref{alg:without} or the randomized CBC construction, to several test functions. Although the smoothness of the integrands is not always known a priori, selecting generating vectors based on the worst-case error for the weighted Korobov space with small $\alpha$ can adapt to cases with larger smoothness, as mentioned in Remark~\ref{rem:stable}. Therefore, in this experiment, we set $\alpha=1$ and $\gamma_j=1/j^2$ as inputs for both approaches. Additionally, we determine the number of repetitions $r$ using \eqref{eq:stable_r} with $g(M)=\max(\log\log M,1)$. While we fixed $N$ in the previous subsection, here we fix $M$ and randomly draw $N$ from $\Pcal_M$ as described in the first step of Algorithm~\ref{alg:without}. We use an unbiased sample variance from $50$ independent replications as a quality measure.

\begin{figure}[t]
\begin{subfigure}{.48\textwidth}
  \centering
  \includegraphics[width=\linewidth]{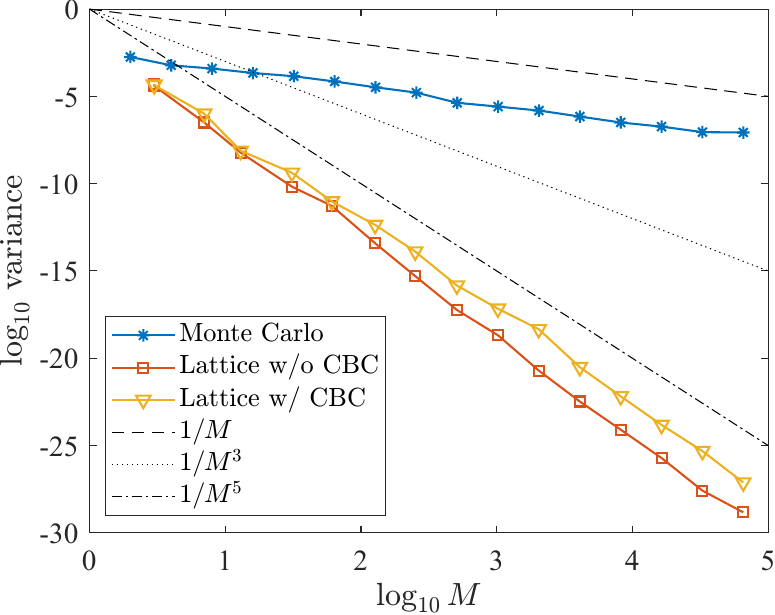}
\end{subfigure}
\hfill
\begin{subfigure}{.48\textwidth}
  \centering
  \includegraphics[width=\linewidth]{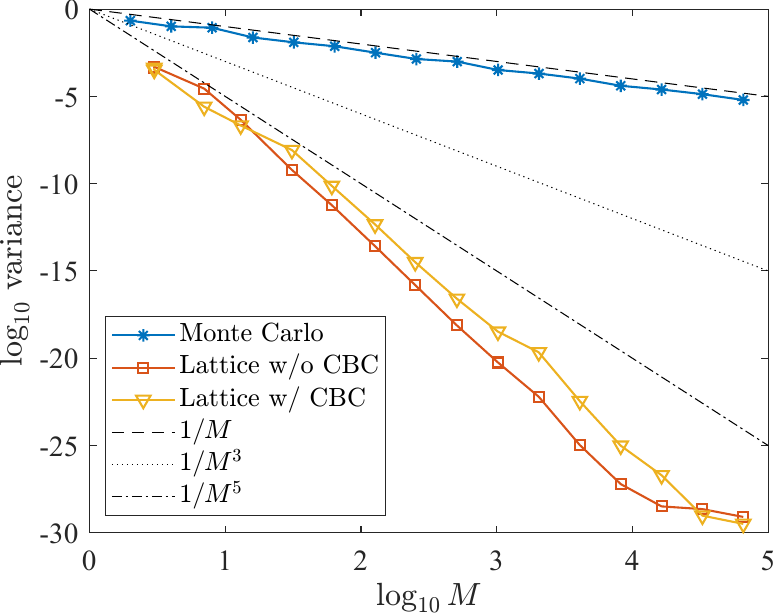}
\end{subfigure}\\[1em]
\begin{subfigure}{.48\textwidth}
  \centering
  \includegraphics[width=\linewidth]{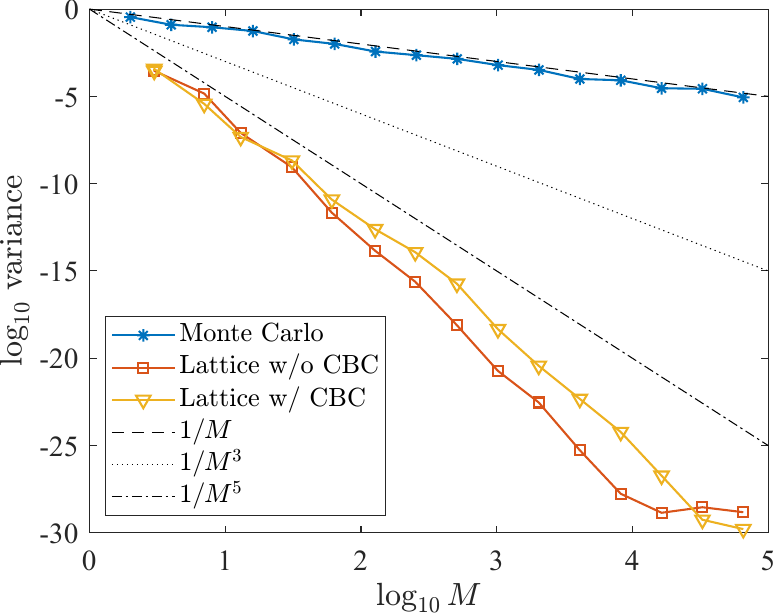}
\end{subfigure}
\hfill
\begin{subfigure}{.48\textwidth}
  \centering
  \includegraphics[width=\linewidth]{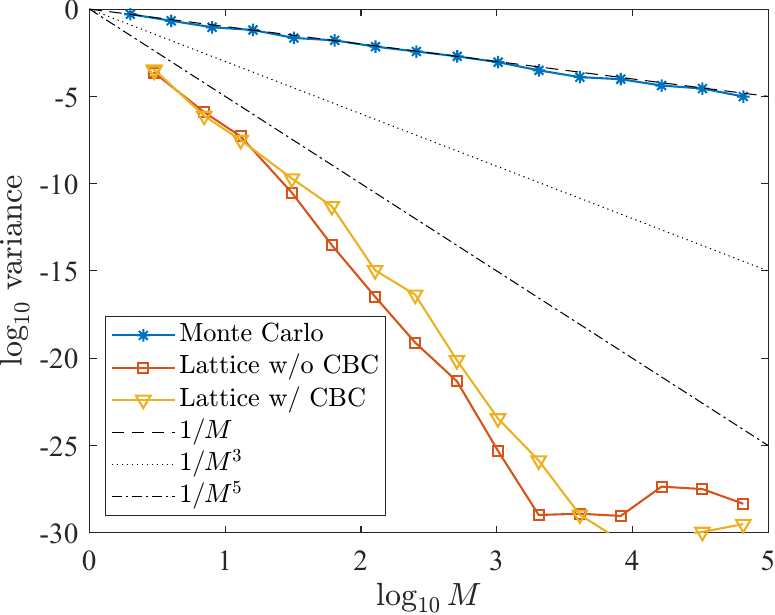}
\end{subfigure}
\caption{Convergence behavior of randomized lattice rules for test functions in $d=2$. Each panel corresponds to a different test function: $f_1$ (top left), $f_2$ (top right), $f_3$ (bottom left), and $f_4$ (bottom right). The horizontal axis represents the maximum number of points $M$, while the vertical axis does the sample variance (logarithmically scaled with base-$10$). The results of the standard Monte Carlo method are represented by blue $\ast$ for reference. The result of Algorithm~\ref{alg:without} is represented by orange $\square$, while that of the randomized CBC approach is by yellow $\triangledown$.}
\label{fig:test_d2}
\end{figure}

Our test functions in a general dimension $d$ are given as follows:
\[ f_1(\bsx)=\prod_{j=1}^{d}\left[ 1+\frac{1}{j^4}\left(x_j-\frac{1}{2}\right)^2\sin\left(2\pi x_j-\pi\right) \right]\]
and
\[ f_\beta(\bsx)=\prod_{j=1}^{d}\left[ 1+\frac{1}{j^{2\beta}}\left((2\beta+1)\binom{2\beta}{\beta}x_j^{\beta}(1-x_j)^{\beta}-1\right) \right],\]
with $\beta=2,3,4$. It can be verified that $f_1\in H_{5/2-\varepsilon,\bsgamma}$ and $f_{\beta}\in H_{(2\beta+1)/2-\varepsilon,\bsgamma}$ for any $\beta=2,3,4$ with arbitrarily small $\varepsilon>0$. This implies that the best possible convergence rates of the variance are $M^{-6+\varepsilon}, M^{-6+\varepsilon}, M^{-8+\varepsilon}, M^{-10+\varepsilon}$, respectively.
Note that the true integral for all these test functions is equal to $1$.

We present the results for all the test functions in the lower-dimensional case of $d=2$ in Figure~\ref{fig:test_d2}. The horizontal axis represents the maximum number of points $M$, while the vertical axis displays the sample variance (both on a base-$10$ logarithmic scale). Additionally, we include the results for the standard Monte Carlo method with $M$ random points as a reference. As anticipated, the variance of the standard Monte Carlo method decreases at a rate of $O(M^{-1})$. In contrast, the two randomized lattice rules demonstrate much faster convergence behaviors. Due to machine precision limitations, the variance no longer decreases beyond a certain point for $f_3$ and $f_4$. Also, we see that our new randomized rule exhibits slightly better performance than the randomized CBC approach overall.
The linear regression for the results in the suitable ranges of $M$ indicates that our proposed randomized lattice rule achieves variance rates of $M^{-5.67}, M^{-7.30}, M^{-7.04}, M^{-9.16}$, respectively.
This observation almost agrees with our theoretical findings. It is noteworthy that while we theoretically expect better convergence for $f_3$ compared to $f_2$, the experiment shows the opposite result. Investigating the reason for this discrepancy is beyond the scope of this work.

As illustrated in Figure~\ref{fig:test_d20}, high-order convergence behaviors of both the randomized lattice rules are maintained even in the high-dimensional setting with $d=20$. However, the rate of convergence deteriorates considerably; the linear regression suggests that our proposed randomized lattice rule achieves variance rates of $M^{-4.21}, M^{-4.09}, M^{-5.46}, M^{-6.77}$, respectively. Although we omit the details here, the rate of convergence can be significantly improved, for example, by replacing the weights $1/j^{2\beta}$ in $f_{\beta}$ for $\beta=2, 3, 4$ with faster-decaying weights. This confirms the importance of weight parameters in high-dimensional settings. In our current experiments, although the randomized CBC approach shows slightly superior performance for $f_2$ compared to our new algorithm, the performance of both approaches is quite comparable for other functions. Hence, in the randomized setting, lattice rules based on the CBC construction are not the only viable choice, suggesting potential for the development of alternative implementable approaches with both theoretical and empirical support.

Finally, we briefly compare our results with those of a deterministic CBC-constructed rank-1 lattice rule. Numerical experiments for the same test functions showed that, in most cases, the (squared) absolute error decay rate of the deterministic rule was comparable to, or even better than, the variance decay rate achieved by the randomized rank-1 lattice rules. This can be attributed to the following: considering a Banach space with the norm
\[ \|f\|_{\alpha,\bsgamma}:= \sup_{\bsk\in \ZZ^d}|\hat{f}(\bsk)|\, r_{\alpha,\bsgamma}(\bsk), \]
our test functions $f_1,\ldots,f_4$ belong to this space with smoothness parameters $\alpha=3,3,4,5$, respectively.
In such spaces, a deterministic rank-1 lattice rule can achieve a worst-case error of order $N^{-\alpha+\varepsilon}$ for arbitrarily small $\varepsilon>0$. 
Moreover, improving this rate by considering randomized error criteria and randomized rank-1 lattice rules seems quite difficult. Therefore, determining the superiority of deterministic versus randomized algorithms for specific functions, in terms of their convergence rates, is not straightforward and requires further investigation.

\begin{figure}[t]
\begin{subfigure}{.48\textwidth}
  \centering
  \includegraphics[width=\linewidth]{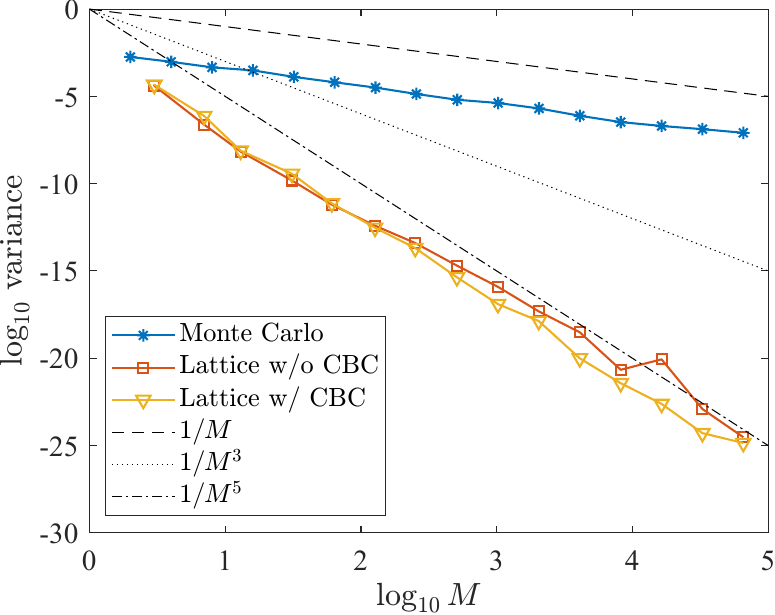}
\end{subfigure}
\hfill
\begin{subfigure}{.48\textwidth}
  \centering
  \includegraphics[width=\linewidth]{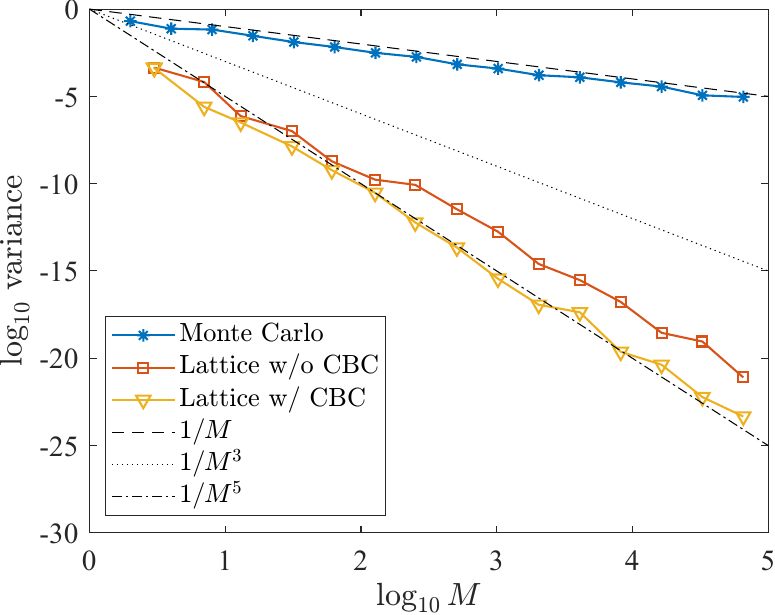}
\end{subfigure}\\[1em]
\begin{subfigure}{.48\textwidth}
  \centering
  \includegraphics[width=\linewidth]{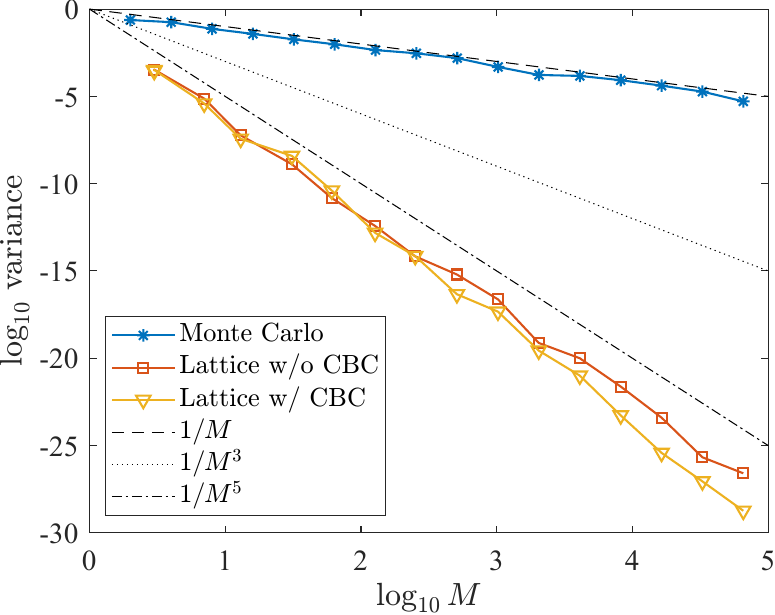}
\end{subfigure}
\hfill
\begin{subfigure}{.48\textwidth}
  \centering
  \includegraphics[width=\linewidth]{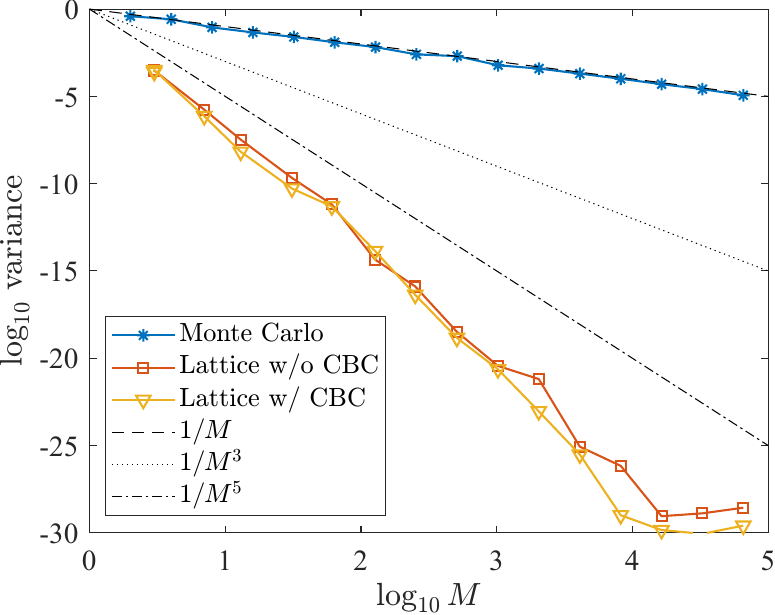}
\end{subfigure}
\caption{Convergence behavior of randomized lattice rules for test functions in higher dimension $d=20$. Other descriptions are the same as Figure~\ref{fig:test_d2}.}
\label{fig:test_d20}
\end{figure}

\bibliographystyle{amsplain}
\bibliography{ref.bib}

\end{document}